\documentclass[a4paper, 12pt]{amsart}

\usepackage{amscd}
\usepackage{amssymb, color}

\theoremstyle{plain}
\newtheorem{theorem}{Theorem}[section]
\newtheorem{definition}{Definition}
\newtheorem{example}{Example}
\newtheorem{lemma}[theorem]{Lemma}
\newtheorem{corollary}[theorem]{Corollary}
\newtheorem{proposition}[theorem]{Proposition}

  \newtheorem{thqt}{Theorem}

% \renewcommand{\thetheorem}{\arabic{section}.\arabic{theorem}}
%  \renewcommand{\theeqation}{(\arabic{section}.%\arabic{equation}}

%\numberwithin{\thetheorem}{section}
%\numberwithin{\thetheqt}{section}
\numberwithin{equation}{section}

\author[T. Yamazaki]{Takeaki Yamazaki}
\address{%
Department of Electrical, Electronic and Computer Engineering,
Toyo University,
Kawagoe-Shi, Saitama, 350-8585, Japan.
} \email{t-yamazaki@toyo.jp}

\keywords{The Aluthge transformation;
the double operator integrals; operator means;
polar decomposition; mean transformation.
}

\subjclass[2010]{Primary 47A64. Secondary 15B48, 
47A12, 47A30.}

\title[A generalization of the Aluthge transformation]
{A generalization of the Aluthge transformation
in the viewpoint of operator means}

\thanks{This research is supported by 
the INOUE ENRYO Memorial Grant, TOYO University.}

\begin{document}

\begin{abstract}
The Aluthge transformation is generalized in the 
viewpoint of  
the axiom of operator means by
using double operator integrals. 
It includes the mean transformation which is 
defined by S. H. Lee, W. Y. Lee and Yoon.
Next we shall give some properties of it.
Especially, 
we shall show that the $n$-th iteration of 
mean transformation
of an invertible matrix converges to a normal matrix.
Inclusion relations among
numerical ranges of 
generalized Aluthge transformations 
respect to some operator means are
considered.
\end{abstract}

\maketitle

\section{Introduction}
Let $B(\mathcal{H})$ 
be the $C^{*}$-algebra of all 
bounded linear operators on a 
complex Hilbert space $\mathcal{H}$.
Let $T=U|T|\in B(\mathcal{H})$ be the polar 
decomposition of $T$.  
The {\bf Aluthge transformation} $\Delta(T)$ 
of $T$ is defined 
 in \cite{A1990} as follows.
\begin{equation}
\Delta(T):=|T|^{\frac{1}{2}}U|T|^{\frac{1}{2}}.
\label{defi:Aluthge transformation}
\end{equation}
Several properties of 
the Aluthge transformation has been studied, 
for example, 
(i) $\sigma(\Delta(T))=\sigma(T)$, 
where $\sigma(T)$ is the spectrum of 
$T\in B(\mathcal{H})$ \cite{H1997}, 
(ii) $\Delta(T)$ has a non-trivial invariant 
subspace if and only if $T$ does so \cite{JKP2000},
and (iii) if $T$ is semi-hyponormal (i.e., 
$|T^{*}|\leq |T|$), then 
$\Delta(T)$ is hyponormal (i.e., 
$\Delta(T)\Delta(T)^{*}\leq 
\Delta(T)^{*}\Delta(T)$) \cite{A1990},
where ``$\leq$'' means the Loewner partial order.
By considering the Loewner-Heinz inequality, 
hyponormality of an operator implies 
semi-hyponormality, but the 
converse implication does not hold in general.
Hence the Aluthge transformation
$\Delta(T)$ may have 
better properties than $T$.
Recently, a related new operator transformation 
has been defined in
\cite{LLY2014} and 
studied in\cite{arXivCCM2018, L2016} which is called the 
{\bf mean transformation} $\hat{T}$ of $T$. 
The definition is
$$ \hat{T}:=\frac{U|T|+|T|U}{2}. $$ 
The aim of this paper is to unify 
these operator transformations in the viewpoint of 
operator means, and give some properties.

\medskip

An operator mean is a binary operation 
on positive semi-definite operators. 
It was defined by Kubo-Ando as follows.
Let $B(\mathcal{H})^{+}$ and 
$B(\mathcal{H})^{++}$
be the sets of 
positive semi-definite and 
positive invertible operators, respectively.

\begin{definition}[Operator mean, \cite{KA1980}]
\label{defi: operator mean}
Let $\frak{M}: B(\mathcal{H})^{+}\times  B(\mathcal{H})^{+} 
\to B(\mathcal{H})^{+}$ be
a binary operation. If $\frak{M}$ satisfies the 
following four conditions, then $\frak{M}$ is called
an {\bf operator mean}.
\begin{itemize}
\item[(1)] If $A\leq C$ and $B\leq D$, then 
$\frak{M}(A,B)\leq \frak{M}(C,D)$,
\item[(2)] $X^{*}\frak{M}(A,B)X\leq 
\frak{M}(X^{*}AX, X^{*}BX)$ for all $X\in 
B(\mathcal{H})$,
\item[(3)] $A_{n}\searrow A$ and 
$B_{n}\searrow B$ imply $\frak{M} (A_{n}, B_{n})
\searrow \frak{M} (A,B)$ in the strong 
operator topology,
\item[(4)] $\frak{M}(I,I)=I$, where $I$ means the 
identity operator in $B(\mathcal{H})$.
\end{itemize}
\end{definition}

%We notice that if $X$ is invertible in (2),
%then equality holds.
%If the condition (4) is removed, then 
%$\frak{M}(A,B)$ is called a solidarity \cite{FFS1990} 
%or a perspective  \cite{EH2014}.
To get a concrete formula of an operator mean,
the following relation is very important.
Let $f$ be a  real-valued function defined on 
an interval $J\subseteq (0, \infty)$.
Then $f$ is said to be {\bf operator monotone} 
if $A\leq B$ for self-adjoint operators
$A,B\in B(\mathcal{H})$
whose spectra are contained in $J$, 
then $f(A)\leq f(B)$, where $f(A)$ and
$f(B)$ are defined by the functional calculus.

\begin{thqt}[\cite{KA1980}]
Let $\frak{M}$ be an operator mean.
Then there exists an operator monotone 
function $f$ on $(0, \infty)$ such that $f(1)=1$ and 
$$ \frak{M}(A,B)=A^{\frac{1}{2}}f(A^{-\frac{1}{2}}
BA^{-\frac{1}{2}})A^{\frac{1}{2}} $$
for all $A\in B(\mathcal{H})^{++}$ and 
$B\in B(\mathcal{H})^{+}$.
\end{thqt}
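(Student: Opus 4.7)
The plan is to recover $f$ from $\frak{M}$ by restricting to scalar multiples of the identity and then promoting the resulting scalar identity to the operator identity via the \emph{transformer equality} extracted from axiom (2). The crucial observation is that (2) applied with an invertible $X$ and then with $X^{-1}$ forces equality, not mere inequality: one obtains
\[
X^{*}\frak{M}(A,B)X=\frak{M}(X^{*}AX, X^{*}BX)
\]
whenever $X$ is invertible, because the two opposite inequalities supplied by (2) pin both sides against each other.

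Using this with $X$ unitary first pins down the shape of $\frak{M}(I,C)$. For every unitary $U$ one has $\frak{M}(I,tI) = U^{*}\frak{M}(I,tI)U$, so $\frak{M}(I,tI)$ lies in the centre of $B(\mathcal{H})$ and hence equals $f(t)I$ for a scalar $f(t)\geq 0$. More generally, restricting $U$ to the unitaries that commute with a positive invertible $C$ forces $\frak{M}(I,C)$ to commute with $\{C\}'$, hence to lie in $\{C\}''=W^{*}(C)$. Matching this with the scalar case on spectral projections of $C$, and invoking axiom (3) to pass from simple to general positive $C$, identifies $\frak{M}(I,C)=f(C)$ with the \emph{same} function $f$, independent of $C$.

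Applying the congruence identity with $X=A^{1/2}$ for $A\in B(\mathcal{H})^{++}$ then yields
\[
\frak{M}(A,B)=A^{1/2}\frak{M}\bigl(I,A^{-1/2}BA^{-1/2}\bigr)A^{1/2}=A^{1/2}f\bigl(A^{-1/2}BA^{-1/2}\bigr)A^{1/2},
\]
which is the desired formula; the case of non-invertible $B$ follows at once by $B\mapsto B+\varepsilon I$ and axiom (3). Operator monotonicity of $f$ is immediate from (1): if $0<S\leq T$ with spectra in $(0,\infty)$, then $f(S)=\frak{M}(I,S)\leq\frak{M}(I,T)=f(T)$, and the normalisation $f(1)=1$ is axiom (4).

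I expect the main obstacle to be the step $\frak{M}(I,C)=f(C)$. The commutation argument alone only locates $\frak{M}(I,C)$ inside $W^{*}(C)$, and naming the Borel function in terms of the scalar values $f(t)$ requires comparing $\frak{M}$ on spectral projections of $C$ with the scalar identity and then using the strong-operator continuity of (3) to pass from step-function approximations to an arbitrary $C\in B(\mathcal{H})^{+}$. Everything else reduces to a short algebraic calculation with the transformer equality, so this spectral-theoretic identification is where the genuine work lies.
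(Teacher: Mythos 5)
This statement is quoted in the paper as background (Theorem A, attributed to Kubo--Ando \cite{KA1980}) and the paper gives no proof of it, so there is nothing internal to compare against; I can only assess your argument on its own terms. Your overall route is the standard one: the transformer \emph{equality} for invertible $X$ (two applications of axiom (2), with $X$ and $X^{-1}$) is correct, the reduction $\frak{M}(A,B)=A^{1/2}\frak{M}(I,A^{-1/2}BA^{-1/2})A^{1/2}$ is correct, the scalar identification $\frak{M}(I,tI)=f(t)I$ via the centre of $B(\mathcal{H})$ is correct, and once $\frak{M}(I,C)=f(C)$ is known, operator monotonicity from axiom (1) and $f(1)=1$ from axiom (4) follow as you say.

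The genuine gap is exactly the step you flag, and the tools you name do not close it. Locating $\frak{M}(I,C)$ in $W^{*}(C)$ only tells you $\frak{M}(I,C)=g_{C}(C)$ for \emph{some} Borel function $g_{C}$ depending a priori on $C$; neither the commutant argument nor axiom (3) by itself lets you ``match with the scalar case on spectral projections,'' because the scalar case computes $\frak{M}(I,tI)$ on all of $\mathcal{H}$, not the compression of $\frak{M}(I,C)$ to a spectral subspace. The missing ingredient is a reducing-projection (direct-sum) lemma: for a projection $P$ commuting with $A$ and $B$, axiom (2) with the \emph{non-invertible} $X=P$ gives $P\frak{M}(A,B)P\leq\frak{M}(AP,BP)$, while axiom (1) together with $\frak{M}(0,0)=0$ (itself obtained from $\frak{M}(\varepsilon I,\varepsilon I)=\varepsilon I$ and (3)) gives the reverse estimate and shows $\frak{M}(AP,BP)$ is supported on $P\mathcal{H}$; combining the two yields
\[
\frak{M}(A,B)=\frak{M}(AP,BP)+\frak{M}\bigl(A(I-P),B(I-P)\bigr).
\]
Applying this to $A=I$, $B=tI$ identifies $\frak{M}(P,tP)=f(t)P$, hence $\frak{M}(I,\sum_{i}t_{i}P_{i})=\sum_{i}f(t_{i})P_{i}$ for finite spectral resolutions, and only then does axiom (3) (approximating $C$ from above by simple functions) give $\frak{M}(I,C)=f(C)$ in general. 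With that lemma inserted your proof is complete; without it, the central identification remains an assertion.
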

If $A\in B(\mathcal{H})^{+}$, we can obtain 
$\frak{M}(A,B)=\lim_{\varepsilon \searrow 0}
\frak{M}(A+\varepsilon I, B)$
because $A+\varepsilon I\in B(\mathcal{H})^{++}$
for $\varepsilon >0$ and Definition \ref{defi: operator mean}
(3). 
The function $f$ is called a {\bf representing 
function} of an operator mean $\frak{M}$.
Throughout this paper, we note 
$\frak{M}_{f}$ for an operator mean 
with a representing function $f$.
In this case, 
$f'(1)=\lambda \in [0,1]$ holds (cf. \cite{H2013}),
and we sometimes 
call $\frak{M}_{f}$ a $\lambda$-weighted 
operator mean. Moreover, 
if $\lambda = f'(1)\in (0,1)$, then
\begin{equation}
[1-\lambda +\lambda x^{-1}]^{-1}\leq
f(x)\leq 1-\lambda +\lambda x 
\label{eq:condition}
\end{equation}
holds for all $x>0$ (cf. \cite{P2016}). 

Typical examples of operator means 
are the $\lambda$-weighted geometric  
and $\lambda$-weighted power means. 
These representing functions are 
$f(x)=x^{\lambda}$ and 
$f(x)=[1-\lambda+\lambda x^{t}]^{\frac{1}{t}}$, 
respectively, 
where $\lambda\in [0,1]$
and $t\in[-1,1]$ (in the case $t=0$, we consider 
$t\to 0$). The weighted power mean interpolates 
arithmetic, geometric and harmonic means 
by putting $t=1,0, -1$, respectively.
%In what follows, $\lambda$-weighted geometric and 
%$\lambda$-weighted power means 
%of $A\in B(\mathcal{H})^{++}$ and
%$B\in B(\mathcal{H})^{+}$ are denoted by
%$A\sharp_{\lambda} B$ and 
%$P_{t}(\lambda;A,B)$, respectively,
%i.e., 
%
%\begin{align*}
%A\sharp_{\lambda}B  & 
%= A^{\frac{1}{2}}(A^{\frac{-1}{2}}B
%A^{\frac{-1}{2}})^{\lambda}A^{\frac{1}{2}},\\
% P_{t}(\lambda;A,B)  & =
%A^{\frac{1}{2}}\left[1-\lambda+\lambda(A^{\frac{-1}{2}}B
%A^{\frac{-1}{2}})^{t}\right]^{\frac{1}{t}}A^{\frac{1}{2}}.
%\end{align*}

The aim of this paper is to apply operator means 
to generalize the Aluthge transformation.
To do this, firstly, we shall explain how to generalize 
the Aluthge transformation in the
 matrices case in Section 2.
Then to extend the discussion in Section 2 into 
Hilbert space operators, 
we will introduce the double operator integrals
in Section 3.
We show that all
operator means can be applied to 
double operator integrals.
In Section 4, we shall generalize the Aluthge 
transformation respect to an arbitrary 
operator mean via double operator integrals.
In Sections 5 and 6, we shall give properties of 
the generalized Aluthge transformation.
In Section 5, we shall consider $n$-th iterated 
generalized Aluthge transformation. We divide this 
discussion into finite and infinite Hilbert space cases.
More precisely, we shall show that 
$n$-th iterated mean transformation 
of every invertible matrix converges to a 
normal matrix,
and show that there is a weighted unilateral shift 
on $\ell^{2}$ such that $n$-th iterated generalized 
Aluthge transformation
does not converge in a week operator topology.
In Section 6, we give inclusion relations among
numerical ranges of 
generalized Aluthge transformations.

\section{A generalization of the Aluthge transformation 
in the matrices case}

In this section, we shall generalize 
the Aluthge transformation in 
the matrices case which is a motivation of 
this paper.
Let $\mathcal{M}_{m}$ be a set of all $m$--by--$m$ 
matrices. 
It is known that $\mathcal{M}_{m}$ is a 
Hilbert space with an inner product
$\langle A,B\rangle :=
\mbox{trace}AB^{*}$.
For $A,B\in \mathcal{M}_{m}$, let 
$\mathbb{L}_{A}$ and
$\mathbb{R}_{B} $ be bounded linear operators on $\mathcal{M}_{m}$  
defined as follows:
$$ \mathbb{L}_A(X)=AX\quad 
\mbox{and}\quad \mathbb{R}_B(X)=XB $$
for $X\in \mathcal{M}_{m}$.
They are called the {\bf left and right multiplications}, 
respectively.
If $A,B\in \mathcal{M}_{m}$ are positive 
semi-definite (resp. positive invertible)
matrices, then $\mathbb{L}_{A}$ and
$\mathbb{R}_{B}$ are positive semi-definite 
(resp. positive invertible) operators
on $\mathcal{M}_{m}$, too.
It is easy to see that $\mathbb{L}_{A}$ and
$\mathbb{R}_{B}$ are commuting on the product, 
i.e., 
$$ \mathbb{L}_{A}\mathbb{R}_{B}(X)=
\mathbb{R}_{B}\mathbb{L}_{A}(X)=AXB $$
holds for all $X\in \mathcal{M}_{m}$.
Moreover, for each Hermitian 
$A\in \mathcal{M}_{m}$, 
$f(\mathbb{L}_{A})(X)=\mathbb{L}_{f(A)}(X)$
(resp. $f(\mathbb{R}_{A})(X)=\mathbb{R}_{f(A)}(X)$)
holds for all analytic functions $f$ 
if $f(A)$ is defined.
Hence we can consider operator means of 
$\mathbb{L}_{A}$ and $\mathbb{R}_{B}$.
For example, 
the arithmetic mean $\frak{A}$ of 
$\mathbb{L}_{A}$ and $\mathbb{R}_{B}$ 
is computed by
$$\frak{A}(\mathbb{L}_{A}, \mathbb{R}_{B})(X)=
\frac{
\mathbb{L}_{A}+\mathbb{R}_{B}}{2}(X)=
\frac{AX+XB}{2}. $$

\begin{theorem}\label{prop:matrix}
Let $A,B\in \mathcal{M}_{m}$ be positive 
invertible. Then for any operator mean 
$\frak{M}$, there exists a positive probability 
measure $d\mu$ on $[0,1]$ such that 
$$ \frak{M}(\mathbb{L}_{A}, \mathbb{R}_{B})(X)
=\int_{0}^{1}\left(\int_{0}^{\infty} 
e^{-x(1-\lambda)A^{-1}}X
e^{-x\lambda B^{-1}}dx\right)d\mu(\lambda) $$
for all $X\in \mathcal{M}_{m}$.
\end{theorem}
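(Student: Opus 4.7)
The plan is to combine three classical representations: the Kubo–Ando representing function of $\frak{M}$, the harmonic integral representation of positive normalized operator monotone functions, and the Laplace representation $T^{-1}=\int_{0}^{\infty}e^{-xT}dx$ for positive invertible $T$. The crucial input that makes everything collapse nicely is the commutativity $\mathbb{L}_{A}\mathbb{R}_{B}=\mathbb{R}_{B}\mathbb{L}_{A}$.

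First, by Theorem A of Kubo--Ando, there is an operator monotone $f$ on $(0,\infty)$ with $f(1)=1$ representing $\frak{M}$. It is a standard fact (following from the integral representation of operator monotone functions, or from Kubo--Ando theory applied to the weighted harmonic means) that every such $f$ admits an integral decomposition
\begin{equation*}
f(x)=\int_{[0,1]}\bigl[(1-\lambda)+\lambda x^{-1}\bigr]^{-1}\,d\mu(\lambda)
\end{equation*}
for some probability measure $\mu$ on $[0,1]$; equivalently, every operator mean is a $\mu$-average of weighted harmonic means. I would cite this at the start and treat it as a black box.

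Next I would apply this decomposition to the operators $\mathbb{L}_{A},\mathbb{R}_{B}\in B(\mathcal{M}_{m})$. Because $A,B$ are positive invertible, so are $\mathbb{L}_{A},\mathbb{R}_{B}$, and these two operators commute. Using the identities $f(\mathbb{L}_{A})=\mathbb{L}_{f(A)}$, $g(\mathbb{R}_{B})=\mathbb{R}_{g(B)}$ and the commutation, one obtains
\begin{equation*}
\frak{M}(\mathbb{L}_{A},\mathbb{R}_{B})=\int_{[0,1]}\bigl[(1-\lambda)\mathbb{L}_{A^{-1}}+\lambda\mathbb{R}_{B^{-1}}\bigr]^{-1}d\mu(\lambda),
\end{equation*}
the inner expression being the $\lambda$-weighted harmonic mean of $\mathbb{L}_{A}$ and $\mathbb{R}_{B}$, which for commuting positive invertible operators reduces to this explicit form.

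Finally, to compute the integrand, I set $S_{\lambda}:=(1-\lambda)\mathbb{L}_{A^{-1}}+\lambda\mathbb{R}_{B^{-1}}$, which is a positive invertible operator on $\mathcal{M}_{m}$, and apply $S_{\lambda}^{-1}=\int_{0}^{\infty}e^{-xS_{\lambda}}dx$. Since $\mathbb{L}_{A^{-1}}$ and $\mathbb{R}_{B^{-1}}$ commute,
\begin{equation*}
e^{-xS_{\lambda}}=e^{-x(1-\lambda)\mathbb{L}_{A^{-1}}}e^{-x\lambda\mathbb{R}_{B^{-1}}}=\mathbb{L}_{e^{-x(1-\lambda)A^{-1}}}\mathbb{R}_{e^{-x\lambda B^{-1}}},
\end{equation*}
so that $e^{-xS_{\lambda}}(X)=e^{-x(1-\lambda)A^{-1}}Xe^{-x\lambda B^{-1}}$. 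Substituting back yields the stated double integral. The main obstacle is the first step: one must be comfortable quoting the representation of every Kubo--Ando mean as an average of weighted harmonic means (and verifying that $\mu$ is a probability measure from $f(1)=1$); once that is in hand, the rest is bookkeeping using the commutativity of $\mathbb{L}_{A}$ and $\mathbb{R}_{B}$ and the Laplace identity.
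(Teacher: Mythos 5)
Your proposal is correct and follows essentially the same route as the paper: both reduce $\frak{M}$ to an average of $\lambda$-weighted harmonic means via $f(x)=\int_{0}^{1}[(1-\lambda)+\lambda x^{-1}]^{-1}d\mu(\lambda)$ and then express $[(1-\lambda)\mathbb{L}_{A}^{-1}+\lambda\mathbb{R}_{B}^{-1}]^{-1}$ as a Laplace-type integral. The only cosmetic difference is that the paper obtains that last step by quoting Heinz's theorem on the Sylvester equation $AY-YB=X$, whereas you compute $S_{\lambda}^{-1}=\int_{0}^{\infty}e^{-xS_{\lambda}}dx$ directly using the commutativity of $\mathbb{L}_{A^{-1}}$ and $\mathbb{R}_{B^{-1}}$ --- an equivalent (and slightly more self-contained) justification of the same identity.
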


To prove Theorem 
\ref{prop:matrix},
we shall use the following result.

\begin{thqt}[\cite{H1951}, 
{\cite[Theorem VII.2.3]{B1997}}]\label{thqt:equation}
Let $A$ and $B$ be operators whose spectra are 
contained in the open right half-plane and 
left half-plane, respectively. Then 
the solution of the equation $AX-XB=Y$ can be 
expressed as 
$$ X=\int_{0}^{\infty}e^{-xA}Ye^{xB}dx. $$
\end{thqt}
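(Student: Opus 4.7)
The plan is to combine the Kubo--Ando integral representation of operator means with Theorem~\ref{thqt:equation}. By \cite{KA1980}, every operator mean $\mathfrak{M}$ admits the representation
$$ \mathfrak{M}(P,Q) = \int_{[0,1]} (P \,!_\lambda\, Q) \, d\mu(\lambda), $$
where $P \,!_\lambda\, Q := [(1-\lambda)P^{-1} + \lambda Q^{-1}]^{-1}$ is the $\lambda$-weighted harmonic mean and $\mu$ is a probability measure on $[0,1]$ (the normalization $\mu([0,1])=1$ is forced by condition (4) of Definition~\ref{defi: operator mean}, namely $\mathfrak{M}(I,I)=I$). This reduces the problem to computing the weighted harmonic mean of $\mathbb{L}_A$ and $\mathbb{R}_B$ applied to $X$, and then integrating against $d\mu$.

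Since $\mathbb{L}_A$ and $\mathbb{R}_B$ are commuting positive invertible operators on $\mathcal{M}_m$ whose functional calculi satisfy $\varphi(\mathbb{L}_A)=\mathbb{L}_{\varphi(A)}$ and $\varphi(\mathbb{R}_B)=\mathbb{R}_{\varphi(B)}$, we obtain
$$ \mathbb{L}_A \,!_\lambda\, \mathbb{R}_B = \bigl[(1-\lambda)\mathbb{L}_{A^{-1}} + \lambda \mathbb{R}_{B^{-1}}\bigr]^{-1}. $$
Setting $Y := (\mathbb{L}_A \,!_\lambda\, \mathbb{R}_B)(X)$ therefore gives $(1-\lambda) A^{-1} Y + \lambda Y B^{-1} = X$, which I would rewrite as the Sylvester equation
$$ (1-\lambda) A^{-1}\, Y \, - \, Y\,(-\lambda B^{-1}) = X. $$
For $\lambda \in (0,1)$, the operator $(1-\lambda)A^{-1}$ has spectrum in the open right half-plane and $-\lambda B^{-1}$ has spectrum in the open left half-plane, so Theorem~\ref{thqt:equation} applies and delivers
$$ Y = \int_0^{\infty} e^{-x(1-\lambda)A^{-1}}\, X \, e^{-x\lambda B^{-1}} \, dx. $$
Substituting this back into the Kubo--Ando integral yields the stated formula.

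The main technical obstacle is the endpoints $\lambda \in \{0,1\}$, where the spectral hypothesis of Theorem~\ref{thqt:equation} fails (one of $(1-\lambda)A^{-1}$ or $\lambda B^{-1}$ has spectrum $\{0\}$, which lies on the imaginary axis, not strictly inside a half-plane). I would handle this by direct verification: for positive invertible $A$ one has $\int_0^\infty e^{-xA^{-1}}\,dx = A$ by the spectral theorem, so at $\lambda=0$
$$ \int_0^\infty e^{-xA^{-1}}\,X\,dx = AX = \mathbb{L}_A(X) = (\mathbb{L}_A\,!_0\,\mathbb{R}_B)(X), $$
and symmetrically at $\lambda=1$. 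Alternatively, both sides depend continuously on $\lambda\in[0,1]$ for fixed positive invertible $A,B$ and fixed $X$, so agreement on the open interval $(0,1)$ extends to the closed interval by continuity. This check is essential, since the representing measure $\mu$ can genuinely charge the endpoints: for instance, the arithmetic mean $\mathfrak{A}$ corresponds to $\mu=\tfrac12(\delta_0+\delta_1)$, for which the endpoint contributions recover exactly $\tfrac12(AX+XB)$.
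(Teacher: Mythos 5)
Your proposal does not prove the statement in question; it proves a different theorem of the paper while using the statement itself as a black box. The statement to be established is Theorem \ref{thqt:equation}: for operators $A$, $B$ with $\sigma(A)$ contained in the open right half-plane and $\sigma(B)$ in the open left half-plane, the solution of the Sylvester equation $AX-XB=Y$ is $X=\int_0^\infty e^{-xA}Ye^{xB}\,dx$. What you wrote is, almost step for step, the paper's own proof of Theorem \ref{prop:matrix} (the integral formula for $\frak{M}(\mathbb{L}_A,\mathbb{R}_B)(X)$): the Kubo--Ando integral representation through weighted harmonic means, the reduction to the equation $(1-\lambda)A^{-1}Y-Y(-\lambda B^{-1})=X$, and then an appeal to Theorem \ref{thqt:equation} to solve it. Invoking the very result you were asked to prove makes the proposal circular as an answer to the assigned task, however correct the surrounding argument may be (and, read as a proof of Theorem \ref{prop:matrix}, it is essentially the paper's argument, with a reasonable extra discussion of the endpoint weights $\lambda\in\{0,1\}$, which the paper handles implicitly via the Dirac-measure formulation).

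A proof of Theorem \ref{thqt:equation} itself---which the paper does not supply, since it cites Heinz and Bhatia---requires three steps, none of which appears in your proposal. First, convergence: since $\sigma(A)$ lies in the open right half-plane and $\sigma(B)$ in the open left half-plane, the spectral mapping theorem and Gelfand's spectral radius formula give constants $C,\varepsilon>0$ with $\|e^{-xA}\|\leq Ce^{-\varepsilon x}$ and $\|e^{xB}\|\leq Ce^{-\varepsilon x}$ for $x\geq 0$, so the integral converges absolutely in norm. Second, the integral solves the equation: from $Ae^{-xA}Ye^{xB}-e^{-xA}Ye^{xB}B=-\frac{d}{dx}\bigl(e^{-xA}Ye^{xB}\bigr)$, integrating over $[0,\infty)$ gives $AX-XB=-\bigl[e^{-xA}Ye^{xB}\bigr]_0^\infty=Y$, using the decay just established. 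Third, uniqueness, which is needed to speak of \emph{the} solution: the hypotheses force $\sigma(A)\cap\sigma(B)=\emptyset$, so by the Sylvester--Rosenblum theorem the map $X\mapsto AX-XB$ is invertible, and the displayed integral is the unique solution. Your endpoint analysis at $\lambda\in\{0,1\}$, while a nice observation, belongs to the proof of Theorem \ref{prop:matrix}, not to the theorem you were assigned.
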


\begin{proof}
[Proof of Theorem \ref{prop:matrix}]
Firstly, we shall show the case  
of $\lambda$-weighted harmonic mean
$\frak{M}_{f}$ for $\lambda\in [0,1]$, i.e.,
a representing function $f$ of $\frak{M}_{f}$ is 
$$ f(x)=[1-\lambda+\lambda x^{-1}]^{-1}. $$
In this case, the harmonic mean 
of $\mathbb{L}_{A}$ and $\mathbb{R}_{B}$ on 
$\mathcal{M}_{m}$ is 
$$ \frak{M}_{f}(\mathbb{L}_{A}, \mathbb{R}_{B})(X)=
[(1-\lambda)\mathbb{L}_{A}^{-1}+
\lambda \mathbb{R}_{B}^{-1}]^{-1}(X). $$
We notice that if $A$ and $B$ are positive 
invertible matrices, then $\mathbb{L}_{A}$ and
$\mathbb{R}_{B}$ are positive invertible, and hence
the above formula is well-defined.
Put $Y:=\frak{M}_{f}
(\mathbb{L}_{A}, \mathbb{R}_{B})(X)$. Then
it is a solution of a matrix equation
$$ [(1-\lambda)\mathbb{L}_{A}^{-1}+
\lambda \mathbb{R}_{B}^{-1}](Y)=X. $$
Thus for $X\in \mathcal{M}_{m}$,
we just have to give a solution $Y$ of the following 
matrix equation
$$ (1-\lambda)A^{-1}Y+\lambda YB^{-1}=X, $$
and it is equivalent to
$$ \{(1-\lambda)A^{-1}\}Y-Y(-\lambda B^{-1})=X. $$
By Theorem \ref{thqt:equation},
we have $Y=\frak{M}_{f}(\mathbb{L}_{A},
\mathbb{R}_{B})(X)$ as follows.
\begin{equation}
\begin{split}
\frak{M}_{f}(\mathbb{L}_{A},\mathbb{R}_{B})(X)
 & = \int_{0}^{\infty}e^{-x(1-\lambda)A^{-1}}
Xe^{-x\lambda B^{-1}}dx\\
 & = \int_{0}^{1}\left(
 \int_{0}^{\infty}e^{-x(1-\lambda)A^{-1}}
Xe^{-x\lambda B^{-1}}dx\right) d\mu(\lambda), 
\end{split}
\label{eq:matrix-harmonic mean}
\end{equation}
where $\mu ([0,1]):= \delta_{\{\lambda\}}([0,1])$,
the Dirac delta supported on $\{\lambda\}$.

Next we shall show an arbitrary 
operator mean case. Let $\frak{M}_{f}$ be 
an operator mean. Then it is known (cf. 
\cite{H2013}) that
there exists a 
positive probability measure $d\mu$ on 
$[0,1]$ such that
\begin{equation}
f(x)=\int_{0}^{1}[1-\lambda+\lambda x^{-1}]^{-1}
d\mu(\lambda). 
\label{eq:integral representation}
\end{equation}
Hence we have
\begin{align*}
\frak{M}_{f}(\mathbb{L}_{A},\mathbb{R}_{B})(X)
& =
\mathbb{L}_{A}
f(\mathbb{L}_{A}^{-1}\mathbb{R}_{B})(X)\\
& =
\int_{0}^{1} [(1-\lambda)\mathbb{L}_{A}^{-1}+
\lambda \mathbb{R}_{B}^{-1}]^{-1}d\mu(\lambda)(X)\\
& =
\int_{0}^{1} [(1-\lambda)\mathbb{L}_{A}^{-1}+
\lambda \mathbb{R}_{B}^{-1}]^{-1}(X)d\mu(\lambda)\\
& =
\int_{0}^{1} \left(
\int_{0}^{\infty}e^{-x(1-\lambda)A^{-1}}
Xe^{-x\lambda B^{-1}}dx\right)
d\mu(\lambda).
\end{align*}
\end{proof}

Now we shall give another formula of
$\frak{M}_{f}(\mathbb{L}_{A}, \mathbb{R}_{B})(X)$.

\begin{theorem}\label{thm: double summation}
Let $A, B\in \mathcal{M}_{m}$ be
positive invertible with the spectral
decompositions 
$A=\sum_{i=1}^{m}s_{i}P_{i}$ and 
$B=\sum_{j=1}^{m}t_{j}Q_{j}$, respectively.
Then for each operator mean $\frak{M}_{f}$, 
\begin{equation}
 \frak{M}_{f}(\mathbb{L}_{A}, \mathbb{R}_{B})(X)=
\sum_{i,j=1}^{m}
\mathcal{P}_{f}(s_{i}, t_{j})P_{i}XQ_{j}, 
\label{eq:double operator integral matrix}
\end{equation}
where the perspective $\mathcal{P}_{f}$ of $f$ is 
defined by
$ \mathcal{P}_{f}(s,t):=sf\left(\frac{t}{s}\right)$.
\end{theorem}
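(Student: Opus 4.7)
The plan is to bypass the integral representation and prove the formula directly from the Kubo--Ando definition, exploiting that $\mathbb{L}_A$ and $\mathbb{R}_B$ commute and can be simultaneously diagonalized on the Hilbert space $\mathcal{M}_m$. Since $A,B$ are positive invertible, so are $\mathbb{L}_A,\mathbb{R}_B$, and commutativity gives $\mathbb{L}_A^{-1/2}\mathbb{R}_B\mathbb{L}_A^{-1/2}=\mathbb{L}_A^{-1}\mathbb{R}_B$. Hence the Kubo--Ando formula (Theorem~A) yields
$$\frak{M}_f(\mathbb{L}_A,\mathbb{R}_B)(X) = \mathbb{L}_A\, f(\mathbb{L}_A^{-1}\mathbb{R}_B)(X).$$

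Next, using $\sum_i P_i=I=\sum_j Q_j$, every $X\in\mathcal{M}_m$ decomposes orthogonally as $X=\sum_{i,j}P_iXQ_j$. Since $AP_i=s_iP_i$ and $Q_jB=t_jQ_j$, a direct computation shows that each block $P_iXQ_j$ is an eigenvector of $\mathbb{L}_A^{-1}\mathbb{R}_B$ with eigenvalue $t_j/s_i>0$:
$$\mathbb{L}_A^{-1}\mathbb{R}_B(P_iXQ_j)=A^{-1}(P_iXQ_j)B=\tfrac{t_j}{s_i}\,P_iXQ_j.$$
Thus $\mathcal{M}_m=\bigoplus_{i,j}P_i\mathcal{M}_mQ_j$ is the spectral decomposition of the positive self-adjoint operator $\mathbb{L}_A^{-1}\mathbb{R}_B$ on $\mathcal{M}_m$, and the functional calculus acts blockwise:
$$f(\mathbb{L}_A^{-1}\mathbb{R}_B)(P_iXQ_j)=f\!\left(\tfrac{t_j}{s_i}\right)P_iXQ_j.$$
Finally, applying $\mathbb{L}_A$ multiplies each block on the left by $s_i$, giving $s_if(t_j/s_i)P_iXQ_j=\mathcal{P}_f(s_i,t_j)P_iXQ_j$; summation over $i,j$ then produces \eqref{eq:double operator integral matrix}.

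The only subtle point is that $f$ applied through the Kubo--Ando functional calculus must agree with $f$ evaluated pointwise on the spectrum of $\mathbb{L}_A^{-1}\mathbb{R}_B$; this is immediate because $f$ is operator monotone on $(0,\infty)$ and therefore continuous there, and $\mathbb{L}_A^{-1}\mathbb{R}_B$ is a positive self-adjoint operator on the finite-dimensional Hilbert space $\mathcal{M}_m$, whose spectrum $\{t_j/s_i\}$ lies in $(0,\infty)$. As a sanity check, one could instead derive the identity from Theorem~\ref{prop:matrix}: substituting the spectral decompositions into $e^{-x(1-\lambda)A^{-1}}$ and $e^{-x\lambda B^{-1}}$, evaluating the inner $dx$-integral to obtain $s_it_j/[(1-\lambda)t_j+\lambda s_i]$, and recognizing the remaining $d\mu(\lambda)$-integral as $s_i f(t_j/s_i)$ via the integral representation \eqref{eq:integral representation} reproduces $\mathcal{P}_f(s_i,t_j)$ and confirms the formula.
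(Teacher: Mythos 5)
Your proof is correct, and it takes a genuinely different route from the paper's. The paper derives the formula from Theorem \ref{prop:matrix}: it substitutes the spectral decompositions of $A$ and $B$ into the exponentials $e^{-x(1-\lambda)A^{-1}}$ and $e^{-x\lambda B^{-1}}$, evaluates the inner $dx$-integral to get $[(1-\lambda)s_i^{-1}+\lambda t_j^{-1}]^{-1}$, and identifies the remaining $d\mu(\lambda)$-integral with $\mathcal{P}_f(s_i,t_j)$ via the integral representation \eqref{eq:integral representation} --- essentially the ``sanity check'' you sketch at the end. Your main argument instead diagonalizes the commuting positive invertible operators $\mathbb{L}_A,\mathbb{R}_B$ simultaneously on the Hilbert space $\mathcal{M}_m$: the blocks $P_i\mathcal{M}_mQ_j$ are the joint eigenspaces, $\mathbb{L}_A^{-1}\mathbb{R}_B$ acts on $P_iXQ_j$ by $t_j/s_i$, and the Kubo--Ando formula $\frak{M}_f(\mathbb{L}_A,\mathbb{R}_B)=\mathbb{L}_A f(\mathbb{L}_A^{-1}\mathbb{R}_B)$ (valid here because commutativity collapses $\mathbb{L}_A^{-1/2}\mathbb{R}_B\mathbb{L}_A^{-1/2}$ to $\mathbb{L}_A^{-1}\mathbb{R}_B$) reduces everything to blockwise functional calculus. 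Each step checks out: $\mathbb{L}_A$ and $\mathbb{R}_B$ are self-adjoint for the trace inner product, the block decomposition is orthogonal, and continuity of $f$ on $(0,\infty)$ makes the pointwise evaluation on the finite spectrum legitimate. Your approach is shorter and more elementary --- it needs neither the integral representation of operator monotone functions nor the Heinz formula for the Sylvester equation, and it would work verbatim for any continuous $f$ on $(0,\infty)$. What the paper's heavier route buys is the explicit factorization $\mathcal{P}_f(s,t)=\int\int e^{-(1-\lambda)s^{-1}x}e^{-\lambda t^{-1}x}\,dx\,d\mu(\lambda)$ into a product of a function of $s$ and a function of $t$, which is exactly what is reused later to prove that $\mathcal{P}_f$ is a Schur multiplier (Theorem \ref{prop:Shur multiplier}) and to extend the construction to $B(\mathcal{H})$; your spectral argument does not generalize to that setting, but for the finite-dimensional statement at hand it is complete.
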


\begin{proof}
Let $A=\sum_{i=1}^{m}s_{i}P_{i}$ and 
$B=\sum_{j=1}^{m}t_{j}Q_{j}$ be spectral 
decompositions of $A$ and $B$, respectively.
For a representing function 
$f$ on $(0,\infty)$ of an operator mean $\frak{M}_{f}$, 
by \eqref{eq:integral representation},
the perspective $\mathcal{P}_{f}$ of $f$ is 
given by
$$ \mathcal{P}_{f}(s,t)
=\int_{0}^{1}[(1-\lambda)s^{-1}+\lambda t^{-1}]^{-1}
d\mu(\lambda) $$
for $s,t>0$. 
Then by Theorem \ref{prop:matrix}, we have
\begin{align*}
& \frak{M}_{f}(\mathbb{L}_{A}, \mathbb{R}_{B})(X)\\
& =\int_{0}^{1}\left( \int_{0}^{\infty} 
e^{-x(1-\lambda)A^{-1}}X
e^{-x\lambda B^{-1}}dx\right) d\mu(\lambda) \\
& 
=\int_{0}^{1}\left\{ \int_{0}^{\infty} 
\left(\sum_{i=1}^{m}
 e^{-x(1-\lambda)s_{i}^{-1}}P_{i}\right)
X
\left(\sum_{j=1}^{m}
e^{-x\lambda t_{j}^{-1}}Q_{j}\right)dx\right\}d\mu(\lambda) \\
& 
=\sum_{i,j=1}^{m} \int_{0}^{1}\left( \int_{0}^{\infty} 
e^{-\{(1-\lambda)s_{i}^{-1}+\lambda t_{j}^{-1}\}x}
dx\right) d\mu(\lambda)P_{i}XQ_{j}\\
& 
=\sum_{i,j=1}^{m} \int_{0}^{1} 
[(1-\lambda)s_{i}^{-1}+\lambda t_{j}^{-1}]^{-1}
d\mu(\lambda)P_{i}XQ_{j}   
=\sum_{i,j=1}^{m} 
\mathcal{P}_{f}(s_{i}, t_{j})P_{i}XQ_{j}.
\end{align*}
\end{proof}

If $A,B\in B(\mathcal{H})^{+}$, then for each 
$\varepsilon>0$, $A_{\varepsilon}:=A+\varepsilon I$
and $B_{\varepsilon}:=B+\varepsilon I$ are both 
positive invertible.
Then we can define 
$\frak{M}_{f}(\mathbb{L}_{A}, \mathbb{R}_{B})(X)$ by
$$ \frak{M}_{f}(\mathbb{L}_{A}, \mathbb{R}_{B})(X)=
\lim_{\varepsilon\searrow 0}
\frak{M}_{f}(\mathbb{L}_{A_{\varepsilon}}, 
\mathbb{R}_{B_{\varepsilon}})(X). $$

\begin{definition}
[Generalization of the Aluthge transformation]
\label{defi:matrix case}
Let $T=U|T|\in \mathcal{M}_{m}$ be the polar 
decomposition with the spectral decomposition 
$|T|=\sum_{i=1}^{n}s_{i}P_{i}$ of $|T|$.
For an operator mean $\frak{M}_{f}$, 
a {\bf generalization of the Aluthge transformation} 
$\Delta_{\frak{M}_{f}}(T)$ of $T$
respect to an operator mean 
$\frak{M}_{f}$ is defined by
$$ \Delta_{\frak{M}_{f}}(T):= 
\sum_{i,j=1}^{m} 
\mathcal{P}_{f}(s_{i}, s_{j})P_{i}UP_{j}. $$
\end{definition}

By using Theorem \ref{thm: double summation}, 
we have
another formula of a generalized Aluthge
transformation.

\begin{corollary}\label{cor: another formula}
Let $T=U|T|\in \mathcal{M}_{m}$ be  
the polar decomposition
such that $|T|$ is invertible. 
For an operator mean $\frak{M}$, 
there exists a positive probability measure 
$d\mu$ on $[0,1]$ such that
$$ \Delta_{\frak{M}}(T)=
\int_{0}^{1}\left( \int_{0}^{\infty} 
e^{-x(1-\lambda) |T|^{-1}}U
e^{-x\lambda |T|^{-1}}dx\right) d\mu(\lambda). $$
\end{corollary}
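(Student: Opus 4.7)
The plan is to obtain this as an essentially immediate consequence of Theorems \ref{prop:matrix} and \ref{thm: double summation}, with $|T|$ playing the role of both $A$ and $B$ and with $U$ playing the role of $X$. The content of the corollary is really just to observe that the ``double operator sum'' formula for $\Delta_{\frak{M}}(T)$ in Definition \ref{defi:matrix case} coincides with the ``double exponential integral'' formula for $\frak{M}(\mathbb{L}_{|T|},\mathbb{R}_{|T|})(U)$.

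First I would set $A=B=|T|$ in Theorem \ref{thm: double summation}. With the given spectral decomposition $|T|=\sum_{i=1}^{m}s_{i}P_{i}$, the theorem yields
$$
\frak{M}_{f}(\mathbb{L}_{|T|},\mathbb{R}_{|T|})(U)
=\sum_{i,j=1}^{m}\mathcal{P}_{f}(s_{i},s_{j})P_{i}UP_{j},
$$
and comparing this with Definition \ref{defi:matrix case} gives the key identification
$$
\Delta_{\frak{M}_{f}}(T)=\frak{M}_{f}(\mathbb{L}_{|T|},\mathbb{R}_{|T|})(U).
$$
This is the one substantive observation in the proof.

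Next, since $|T|$ is invertible by hypothesis, $\mathbb{L}_{|T|}$ and $\mathbb{R}_{|T|}$ are positive invertible operators on $\mathcal{M}_{m}$, so the hypotheses of Theorem \ref{prop:matrix} are met with $A=B=|T|$ and $X=U$. Applying that theorem produces the asserted integral representation
$$
\frak{M}(\mathbb{L}_{|T|},\mathbb{R}_{|T|})(U)
=\int_{0}^{1}\left(\int_{0}^{\infty}e^{-x(1-\lambda)|T|^{-1}}\,U\,e^{-x\lambda|T|^{-1}}\,dx\right)d\mu(\lambda),
$$
with $d\mu$ the probability measure on $[0,1]$ supplied by the integral representation \eqref{eq:integral representation} of the representing function $f$. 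Combining the two displays finishes the proof.

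There is no real obstacle here: the only thing one must be careful about is invertibility of $|T|$ (to ensure $\mathbb{L}_{|T|}^{-1}$ and $\mathbb{R}_{|T|}^{-1}$ exist, so that Theorem \ref{prop:matrix} applies without an $\varepsilon$-regularization), and this is built into the hypothesis. If one wished to drop invertibility, the extension to $|T|\in B(\mathcal{H})^{+}$ would go through the $\varepsilon\searrow 0$ limit described just after Theorem \ref{thm: double summation}, but that is not needed for the corollary as stated.
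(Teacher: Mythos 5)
Your proposal is correct and follows the same route the paper intends: the paper states the corollary with the remark ``By using Theorem \ref{prop:matrix}\dots'' and omits the details, which are exactly your identification $\Delta_{\frak{M}_{f}}(T)=\frak{M}_{f}(\mathbb{L}_{|T|},\mathbb{R}_{|T|})(U)$ via Theorem \ref{thm: double summation} followed by an application of Theorem \ref{prop:matrix} with $A=B=|T|$ and $X=U$. Your remark about invertibility of $|T|$ being the only hypothesis that needs checking is also accurate.
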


\begin{example}
Let $T\in \mathcal{M}_{m}$ with 
the polar decomposition $T=U|T|$ and
the spectral decomposition
$|T|=\sum_{i=1}^{m}s_{i}P_{i}$, and 
let $\frak{M}$ be an operator mean.
Then we have the following examples of 
$\Delta_{\frak{M}}(T)$.
Note that $\sum_{i=1}^{m} P_{i}=U^{*}U$.
\begin{itemize}
\item[(1)] Mean transformation 
\cite{LLY2014}.\\
Let $\frak{M}_{f}$ be the $\lambda$-
weighted arithmetic mean, i.e., 
the representing function of $\frak{M}_{f}$ is
$f(t)=1-\lambda +\lambda t$. Then
\begin{align*}
\Delta_{\frak{M}_{f}}(T)
& =
\sum_{i,j=1}^{m}
[(1-\lambda)s_{i}+\lambda s_{j}]
P_{i}UP_{j} \\
& =
(1-\lambda)\sum_{i,j=1}^{m}
s_{i}P_{i}UP_{j}+
\lambda \sum_{i,j=1}^{m}s_{j}P_{i}UP_{j}\\
& =
(1-\lambda)\left(\sum_{i,=1}^{m}s_{i}P_{i}\right)U
\left(\sum_{j=1}^{m}P_{j}\right)\\
& \hspace*{3cm} +
\lambda \left(\sum_{i=1}^{m}P_{i}\right)U
\left(\sum_{j=1}^{m}s_{j}P_{j}\right)\\
& = (1-\lambda)|T|U+\lambda U^{*}UU|T|.
\end{align*}
Especially, if $U$ is an isometry, then
$\Delta_{\frak{M}_{f}}(T)=\hat{T}$ 
(i.e., the weighted mean transform).
\item[(2)] Generalized Aluthge transformation
\cite{H1997}.\\
Let $\frak{M}_{f}$ be the $\lambda$-weighted 
geometric mean, i.e., the representing function of 
$\frak{M}_{f}$ is
$f(t)=t^{\lambda}$. Then 
\begin{align*}
\Delta_{\frak{M}_{f}}(T)
& =
\sum_{i,j=1}^{m} 
s_{i}^{1-\lambda}s_{j}^{\lambda}
P_{i}UP_{j} \\
& =
\left(\sum_{i=1}^{m} s_{i}^{1-\lambda} P_{i}\right)U
\left(\sum_{j=1}^{m} t_{j}^{\lambda} P_{j}\right)
 =
|T|^{1-\lambda}U|T|^{\lambda}.
\end{align*}
\end{itemize}
\end{example}

From Corollary \ref{cor: another formula}, we have
a basic property of a 
generalization of the Aluthge transformation.

\begin{theorem}\label{thm:fixed point}
Let $T\in \mathcal{M}_{m}$. Let 
$\frak{M}_{f}$ be an operator mean 
satisfying $f'(1)\in (0,1)$. %\eqref{eq:condition}. 
Then  
$\Delta_{\frak{M}_{f}}(T)=T$ if and only if 
$T$ is normal.
\end{theorem}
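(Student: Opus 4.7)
The plan is to use the explicit sum formula $\Delta_{\frak{M}_f}(T)=\sum_{i,j}\mathcal{P}_f(s_i,s_j)P_iUP_j$ from Definition \ref{defi:matrix case} and match coefficients against $T$ itself. The single algebraic ingredient I would establish first is that, when $\lambda=f'(1)\in(0,1)$, the equation $f(x)=x$ forces $x=1$. This follows directly from \eqref{eq:condition}: the upper bound $f(x)\leq 1-\lambda+\lambda x$ combined with $f(x)=x$ yields $x\leq 1$, and the lower bound $f(x)\geq[1-\lambda+\lambda x^{-1}]^{-1}$ yields $x\geq 1$. Equivalently, $\mathcal{P}_f(s_i,s_j)=s_j$ holds if and only if $s_i=s_j$, and this is where the strictness $\lambda\neq 0,1$ is essential.

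For the implication ``$T$ normal $\Rightarrow \Delta_{\frak{M}_f}(T)=T$'', I would invoke the standard fact that a normal $T=U|T|$ satisfies $U|T|=|T|U$ (an immediate consequence of rewriting $T^*T=TT^*$ as $|T|^2=U|T|^2U^*$ and commuting through $U^*U$), so $U$ commutes with each spectral projection $P_i$. This gives $P_iUP_j=\delta_{ij}UP_i$, and substituting together with $\mathcal{P}_f(s_i,s_i)=s_if(1)=s_i$ collapses $\Delta_{\frak{M}_f}(T)$ to $\sum_i s_iUP_i=U|T|=T$.

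For the converse, I would set $P_0:=I-\sum_{i\geq 1}P_i$ so that $\sum_{i\geq 0}P_i=I$, and expand $T=\sum_{j\geq 1}s_jUP_j=\sum_{i\geq 0,\,j\geq 1}s_jP_iUP_j$. The assumption $\Delta_{\frak{M}_f}(T)=T$, together with the linear independence of the block pieces $P_iUP_j$ (they live in pairwise orthogonal matrix corners), yields $[\mathcal{P}_f(s_i,s_j)-s_j]P_iUP_j=0$ for $i,j\geq 1$ and $P_0UP_j=0$ for $j\geq 1$. Since the $s_i$ are distinct in the spectral decomposition, the algebraic fact above forces $P_iUP_j=0$ for all $i\neq j$, and combined with $UP_0=0$ this shows $U$ commutes with every $P_i$, hence with $|T|$.

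To close the argument, since $U$ is an isometry on $(\ker T)^\perp=\sum_{i\geq 1}P_i\mathcal{H}$ and preserves each $P_i\mathcal{H}$, in finite dimensions $U$ restricts to a unitary on each such summand, giving $UU^*=U^*U=\sum_{i\geq 1}P_i$. A one-line check then gives $T^*T=|T|U^*U|T|=|T|^2=|T|^2UU^*=U|T|^2U^*=TT^*$, so $T$ is normal. The essential obstacle is really the pointwise identity $f(x)=x\Leftrightarrow x=1$ under $f'(1)\in(0,1)$ and the subsequent block-diagonalization of $U$; the extra bookkeeping for the $P_0$ piece in the non-invertible case is the only minor nuisance.
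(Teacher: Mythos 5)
Your proof is correct and follows essentially the same route as the paper: expand $\Delta_{\frak{M}_f}(T)$ and $T$ over the blocks $P_iUP_j$, and use \eqref{eq:condition} to show $\mathcal{P}_f(s_i,s_j)\neq s_j$ whenever $s_i\neq s_j$, forcing $P_iUP_j=0$ off the diagonal. The only (harmless) differences are cosmetic: you finish by verifying $T^{*}T=TT^{*}$ directly from the commutation of $U$ with the spectral projections (and you track the kernel block $P_0$ explicitly), whereas the paper reduces to the known fixed-point characterization of the classical Aluthge transformation after an $\varepsilon$-perturbation to make $|T|$ invertible.
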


\begin{proof}
For $\varepsilon >0$, $|T|_{\varepsilon}:=
|T|+\varepsilon I$ is positive invertible, and 
$|T|_{\varepsilon}\searrow |T|$ as $\varepsilon 
\searrow 0$. By considering this fact,
we may assume that $|T|$ is 
invertible.
Let $T=U|T|$ be the polar decomposition of $T$.
Then it is known that $T$ is normal if and only if 
$U|T|=|T|U$. Hence if $T$ is normal, then 
by Corollary \ref{cor: another formula},
\begin{align*}
\Delta_{\frak{M}}(T) & =
\int_{0}^{1}\left( \int_{0}^{\infty} 
e^{-x(1-\lambda) |T|^{-1}}U
e^{-x\lambda |T|^{-1}}dx\right) d\mu(\lambda)\\
& =
U \int_{0}^{1}\left( \int_{0}^{\infty} 
e^{- x|T|^{-1}}dx\right) d\mu(\lambda)=T.
\end{align*}

Let $|T|=\sum_{i=1}^{m}s_{i}P_{i}$
be the spectral decomposition.
Assume that $\Delta_{\frak{M}_{f}}(T)=T$ holds.
Then we have
$$  \sum_{i,j=1}^{m} 
\mathcal{P}_{f}(s_{i}, s_{j})P_{i}UP_{j}=
\Delta_{\frak{M}_{f}}(T)=T=
\sum_{j=1}^{m}s_{j}UP_{j}. $$
By multiplying $P_{i}$ and $P_{j}$ from the left and right
sides, respectively, 
$$   
\mathcal{P}_{f}(s_{i}, s_{j})P_{i}UP_{j}=s_{j}P_{i}UP_{j}, $$
since $P_{i}$ are orthogonal projections.
By $f'(1)\in (0,1)$ and an inequality \eqref{eq:condition}, 
$\mathcal{P}_{f}(s_{i}, s_{j})\neq s_{j}$ holds for $i\neq j$.
Then $P_{i}UP_{j}=0$ holds for $i\neq j$.
Hence we have
\begin{align*}
\Delta_{\frak{M}_{f}}(T) & =
\sum_{i,j=1}^{n} 
\mathcal{P}_{f}(s_{i}, s_{j})P_{i}UP_{j}\\
& =
\sum_{i=1}^{n} 
\mathcal{P}_{f}(s_{i}, s_{i})P_{i}UP_{i}\\
& =
\sum_{i=1}^{n} 
s_{i} P_{i}UP_{i}\\
& =
\sum_{i,j=1}^{n} 
\sqrt{s_{i}s_{j}}P_{i}UP_{j}=\Delta(T)
\quad \text{(Aluthge transformation).}
\end{align*}
Therefore $T$ is normal since $T=\Delta(T)$ 
\cite[Proposition 1.10]{JKP2000}.
\end{proof}

\section{Double operator integrals}
Although $\mathbb{L}_{A}$ and $\mathbb{R}_{B}$
can be defined on $B(\mathcal{H})$, 
we cannot consider operator means of 
$\mathbb{L}_{A}$ and $\mathbb{R}_{B}$,
since $B(\mathcal{H})$ is not a Hilbert space. 
To discuss similar argument in $B(\mathcal{H})$,
we shall use the double operator integrals. 
The double operator integrals
 was first appeared in \cite{DK1956}.
Then it was developed by 
Birman and Solomyak in \cite{BS1965}
and Peller in \cite{P1984, P1985}
(nice surveys are \cite{BS2003, HK2003}).
Let $A,B\in \mathcal{B(H)}^{+}$ with the 
spectral decompositions 
$$ A=\int_{\sigma(A)}sdE_{s}\text{ and }
B=\int_{\sigma(B)}tdF_{t}. $$
Let $\lambda$ (resp. $\mu$) be a finite 
positive measure on an interval 
$\sigma(A)$ (resp. $\sigma(B)$) equivalent  
(in the absolute continuity sense) to $dE_{s}$ 
(resp. $dF_{t}$). 
Let $\varphi\in 
L^{\infty}(\sigma(A)\times \sigma(B); \lambda\times 
\mu)$. For $X\in \mathcal{B(H)}$, 
the {\bf double operator
integrals} is given by 
$$ \Phi_{A,B,\varphi}(X) :=
\int_{\sigma(A)}\int_{\sigma(B)}
\varphi(s,t)dE_{s}XdF_{t}. $$
If $X\in \mathcal{C}_{2}(\mathcal{H})$ 
(Hilbert-Schmidt class), then 
$ \Phi_{A,B,\varphi}(X) \in 
\mathcal{C}_{2}(\mathcal{H})$
because $\mathcal{C}_{2}(\mathcal{H})$ 
is a Hilbert space, 
and $\Phi_{A,B,\varphi}$ can  be defined by the similar way to 
Theorem \ref{thm: double summation}.
To extend this into $X\in B(\mathcal{H})$, 
we shall consider 
Schur multiplier as follows:

\begin{definition}
[Schur multiplier, (cf. \cite{HK2003})]
\label{Schur multiplier}
When $\Phi_{A,B,\varphi}
(=\Phi_{A,B,\varphi}|_{\mathcal{C}_{1}(\mathcal{H})}):
X\mapsto \Phi_{A,B,\varphi}(X)$ 
gives rise to a bounded
transformation on the ideal 
$\mathcal{C}_{1}(\mathcal{H}) 
(\subset \mathcal{C}_{2}(\mathcal{H}))$
of trace class operators, $\varphi(s,t)$ is called 
a {\bf Schur multiplier} (relative to the pair $(A,B)$).
\end{definition}
 
The double operator integrals can be extended 
to $\mathcal{B(H)}$ by making use of the duality 
$\mathcal{B(H)}=\mathcal{C}_{1}(\mathcal{H})^{*}$ via
$$ (X,Y)\in \mathcal{C}_{1}(\mathcal{H})
\times \mathcal{B(H)}
\mapsto {\rm trace}(XY^{*})\in \mathbb{C}. $$
This proof is introduced in \cite{HK2003}.

\begin{thqt}[{\cite{HK2003, P1984, P1985}}]
\label{thqt:equivalence Schur}
For $\varphi \in L^{\infty}(\sigma(A)\times 
\sigma(B); \lambda\times \mu)$, the
following conditions are all equivalent:
\begin{itemize}
\item[{\rm (i)}] $\varphi$ is a Schur multiplier;
\item[{\rm (ii)}] whenever a measurable function 
$k:\sigma(A)\times \sigma(B) \to \mathbb{C}$ 
is the kernel of a trace class operator 
$L^{2}(\sigma(A); \lambda) \to 
L^{2}(\sigma(B); \mu)$, so is the product
$\varphi (s,t)k(s,t)$;
\item[{\rm (iii)}] one can find a finite measure space
$(\Omega, \sigma')$ and functions $\alpha\in 
L^{\infty}(\sigma(A)\times \Omega; \lambda\times 
\sigma')$,
$\beta\in 
L^{\infty}(\sigma(B)\times \Omega; \mu\times 
\sigma')$ such that 
\begin{equation}
\varphi(s,t)=\int_{\Omega} \alpha(s,x)
\beta(t,x)d\sigma'(x)
\label{eq: f(s,t)}
\end{equation}
for all $s\in \sigma(A)$, $t\in \sigma(B)$;
\item[{\rm (iv)}] 
one can find a measure space $(\Omega, \sigma')$
and measurable functions $\alpha$, $\beta$
on $\sigma(A)\times \Omega$,
$\sigma(B)\times \Omega$ respectively such that
the above \eqref{eq: f(s,t)} holds and 
$$ \left\| \int_{\Omega}|\alpha(\cdot, x)|^{2}
d\sigma'(x)\right\|_{L^{\infty}(\lambda)}
\left\| \int_{\Omega}|\beta(\cdot, x)|^{2}
d\sigma'(x)\right\|_{L^{\infty}(\lambda)}
<\infty. $$
\end{itemize}
\end{thqt}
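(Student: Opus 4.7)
The plan is to prove the four conditions equivalent by a cycle, splitting off the trivial direction first and isolating the deep direction last. First I would handle (iii) $\Rightarrow$ (iv): since $\sigma'$ is finite and $\alpha,\beta$ are essentially bounded on product spaces, one has $\int_\Omega |\alpha(s,x)|^2\,d\sigma'(x)\le \|\alpha\|_\infty^2\,\sigma'(\Omega)$ uniformly in $s$, so the two $L^\infty$ norms in (iv) are finite.

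Next I would show (iv) $\Rightarrow$ (i). Given a factorization $\varphi(s,t)=\int_\Omega \alpha(s,x)\beta(t,x)\,d\sigma'(x)$, I would define $\alpha(A,x):=\int_{\sigma(A)}\alpha(s,x)\,dE_s$ and $\beta(B,x):=\int_{\sigma(B)}\beta(t,x)\,dF_t$ by the Borel functional calculus, and verify (first for finite-rank $X$ and then by continuity) that
\[
\Phi_{A,B,\varphi}(X)=\int_\Omega \alpha(A,x)\,X\,\beta(B,x)\,d\sigma'(x).
\]
To get trace-class boundedness, I would factor $X=VW$ with $V,W\in\mathcal{C}_2(\mathcal{H})$ and $\|V\|_2\|W\|_2=\|X\|_1$, and estimate
\[
\|\Phi_{A,B,\varphi}(X)\|_1\le \int_\Omega \|\alpha(A,x)V\|_2\,\|W\beta(B,x)\|_2\,d\sigma'(x),
\]
then apply Cauchy--Schwarz in $x$. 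The key identity $\int_\Omega \|\alpha(A,x)V\|_2^2\,d\sigma'(x)=\mathrm{tr}\bigl(\int_\Omega |\alpha(A,x)|^2\,d\sigma'(x)\cdot VV^*\bigr)$ together with the functional-calculus bound $\|\int_\Omega|\alpha(A,x)|^2\,d\sigma'(x)\|\le \|\int_\Omega|\alpha(\cdot,x)|^2\,d\sigma'(x)\|_{L^\infty(\lambda)}$ (and the symmetric statement for $\beta$) turns the estimate into exactly the bound in (iv) times $\|V\|_2\|W\|_2=\|X\|_1$.

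For (i) $\Leftrightarrow$ (ii) I would use the spectral theorem to unitarily identify $\mathcal{H}$ with $L^2(\sigma(A);\lambda)\otimes\mathcal{K}_1$ and with $L^2(\sigma(B);\mu)\otimes\mathcal{K}_2$ (up to multiplicity), so that $A$ and $B$ become multiplication operators. Under this identification a Hilbert--Schmidt operator is represented by its integral kernel $k(s,t)$, and a direct computation shows that the double operator integral $\Phi_{A,B,\varphi}$ acts on such kernels by pointwise multiplication $k\mapsto \varphi\cdot k$. Bounded extension to $\mathcal{C}_1(\mathcal{H})$, i.e.\ condition (i), is thus literally the property that Schur multiplication by $\varphi$ sends trace-class kernels to trace-class kernels, which is condition (ii).

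The hard part will be the remaining implication (i) (equivalently (ii)) $\Rightarrow$ (iii). I would view $\varphi$ as defining a bilinear form on pairs $(g,h)\in L^\infty(\sigma(A);\lambda)\times L^\infty(\sigma(B);\mu)$ via $(g,h)\mapsto \langle \Phi_{A,B,\varphi}(gX_0h), Y_0\rangle$ for suitably chosen $X_0,Y_0$, and try to control this form in terms of $L^2$-norms using the Schur multiplier bound. The goal is to put this bilinear form into the framework of a Grothendieck--Pietsch type factorization theorem on commutative $C^*$-algebras, which produces a measure space $(\Omega,\sigma')$ and functions $\alpha,\beta$ realizing the integral representation. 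Handling the measurability of $\alpha,\beta$ jointly in the spectral and parameter variables, and verifying that the resulting kernel reproduces $\varphi$ almost everywhere, is the delicate step; everything else is essentially routine once this factorization is in hand.
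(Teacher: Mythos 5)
This statement is quoted in the paper as background (Theorem~C, attributed to Hiai--Kosaki and Peller); the paper supplies no proof of it, only the remark that ``the proof is introduced in \cite{HK2003}.'' So there is nothing in the paper to compare your argument against line by line; I can only judge your sketch against the standard proof in those references, which it in fact follows. Your treatment of (iii)~$\Rightarrow$~(iv) is trivially correct, and your (iv)~$\Rightarrow$~(i) argument --- factor $X=VW$ with $\|V\|_{2}\|W\|_{2}=\|X\|_{1}$, push the integral representation through the functional calculus, and apply Cauchy--Schwarz in $x$ together with $\int_{\Omega}\|\alpha(A,x)V\|_{2}^{2}\,d\sigma'(x)=\mathrm{tr}\bigl(\int_{\Omega}|\alpha(A,x)|^{2}\,d\sigma'(x)\cdot VV^{*}\bigr)$ --- is exactly the Hiai--Kosaki argument and yields the right constant. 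Two details you wave at but should not dismiss as automatic: in (iv) the function $s\mapsto\alpha(s,x)$ need not be bounded for fixed $x$ (only the $L^{2}(\Omega)$-valued map $s\mapsto\alpha(s,\cdot)$ is essentially bounded), so defining $\alpha(A,x)$ pointwise in $x$ requires care; and in (i)~$\Leftrightarrow$~(ii) the reduction to the multiplicity-free model $L^{2}(\sigma(A);\lambda)$ needs an argument when $A$ or $B$ has nonconstant spectral multiplicity.

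The genuine gap is the implication (i)~$\Rightarrow$~(iii), which you correctly identify as the hard direction but then only describe as a ``goal'': set up a bilinear form on $L^{\infty}(\lambda)\times L^{\infty}(\mu)$ and ``try to put it into the framework of a Grothendieck--Pietsch type factorization theorem.'' That step \emph{is} Peller's theorem --- by duality a Schur multiplier on $\mathcal{C}_{1}$ gives a bounded multiplier on $B(\mathcal{H})=\mathcal{C}_{1}^{*}$, and one must then invoke the Grothendieck-type characterization of such multipliers as $\varphi(s,t)=\langle a(s),b(t)\rangle_{\mathcal{K}}$ with $a,b$ bounded measurable Hilbert-space-valued functions, and finally convert that into the finite-measure-space form of (iii). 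Your proposal does not say which $X_{0},Y_{0}$ make the bilinear form capture $\varphi$, does not state the factorization theorem being invoked, and explicitly defers the measurability of $\alpha,\beta$; as written this direction is a plan rather than a proof. Since the result is standard and merely cited in the paper, the honest course is to do the same --- cite \cite{HK2003} or \cite{P1985} for (i)~$\Rightarrow$~(iii) rather than leave the impression that it follows by routine arguments.
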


An important result of this paper 
is to give a guarantee the 
perspective of representing functions $f$ 
of all operator means are Schur multiplier. 

\begin{theorem}\label{prop:Shur multiplier}
Let $f$ be a representing function 
of an operator mean. % satisfying 
%
%\begin{equation}
% [1-\lambda +\lambda x^{-1}]^{-1}\leq 
%f(x)\leq 1-\lambda+\lambda x
%\label{eq:condition}
%\end{equation}
%
%for $\lambda =f'(1)$ and all $x>0$.
Then the perspective $\mathcal{P}_{f}$ of $f$
is a Schur multiplier.
\end{theorem}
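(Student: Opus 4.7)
My plan is to use the Kubo-Ando integral representation \eqref{eq:integral representation} to write $\mathcal{P}_f$ as a probability-$\mu$ average of harmonic-mean perspectives,
\[
  \mathcal{P}_f(s,t) \;=\; \int_0^1 \frac{st}{(1-\lambda)t+\lambda s}\,d\mu(\lambda),
\]
and then, by Minkowski's inequality for vector-valued integrals, to reduce the problem to showing that each harmonic-mean kernel $\mathcal{P}_\lambda(s,t):=\frac{st}{(1-\lambda)t+\lambda s}$ is a Schur multiplier whose Schur norm is bounded \emph{uniformly} in $\lambda\in[0,1]$.

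For a fixed $\lambda$, I would exploit the identity $\frac{1}{a+b}=\int_0^\infty e^{-x(a+b)}\,dx$ to factor
\[
  \mathcal{P}_\lambda(s,t)=\int_0^\infty e^{-x(1-\lambda)/s}\,e^{-x\lambda/t}\,dx,
\]
which, through Theorem \ref{thqt:equation}, identifies the associated double operator integral on $\mathcal{C}_1(\mathcal{H})$ with the Sylvester-type operator
\[
  \Phi_\lambda(X)=\int_0^\infty e^{-x(1-\lambda)A^{-1}}\,X\,e^{-x\lambda B^{-1}}\,dx
  \qquad (A,B\in B(\mathcal{H})^{++}).
\]
Using $\|e^{-x(1-\lambda)A^{-1}}\|=e^{-x(1-\lambda)/\|A\|}$ together with the Banach-ideal property of $\mathcal{C}_1$ would then give
\[
  \|\Phi_\lambda(X)\|_{\mathcal{C}_1} \;\le\;
  \Bigl[\tfrac{1-\lambda}{\|A\|}+\tfrac{\lambda}{\|B\|}\Bigr]^{-1}\!\|X\|_{\mathcal{C}_1}
  \;\le\; \max(\|A\|,\|B\|)\,\|X\|_{\mathcal{C}_1},
\]
a bound that is independent of $\lambda$. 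Integrating against $\mu$ yields $\|\Phi_f\|_{\mathcal{C}_1\to\mathcal{C}_1}\le \max(\|A\|,\|B\|)$, establishing condition~(i) of Theorem \ref{thqt:equivalence Schur}; non-invertible $A,B\in B(\mathcal{H})^+$ are handled by replacing them with $A+\varepsilon I,\,B+\varepsilon I$ and letting $\varepsilon\searrow 0$, since the estimate is uniform in $\varepsilon$.

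The principal obstacle is precisely the \emph{uniform}-in-$\lambda$ bound. The naive verification of condition~(iv) from the exponential factorization produces $\int_0^\infty|\alpha|^2\,dx=\frac{s}{2(1-\lambda)}$, whose product with its $\beta$-analogue $\frac{t}{2\lambda}$ is not integrable against a general Kubo measure—it already fails for $\mu=\delta_0$ or $\delta_1$, corresponding to the trivial means $\mathcal{P}_f(s,t)=s$ or $t$. The resolution is to bypass condition~(iv) entirely and estimate the $\mathcal{C}_1$ norm of $\Phi_\lambda$ directly through the Sylvester integral, where the exponentials combine into a single decay rate and the resulting weighted harmonic mean of $\|A\|$ and $\|B\|$ is always dominated by the maximum, regardless of $\lambda$. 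The only remaining technicality is the standard one of verifying that $\Phi_\lambda|_{\mathcal{C}_1}$ genuinely coincides with the double operator integral associated with $\mathcal{P}_\lambda$, which I would do by first checking agreement on $\mathcal{C}_2$ via the spectral theorem (exactly as in the matrix computation of Theorem \ref{thm: double summation}) and then restricting to $\mathcal{C}_1\subset\mathcal{C}_2$.
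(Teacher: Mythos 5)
Your proposal is correct, and it diverges from the paper's argument at exactly the decisive step. The paper starts the same way --- the representation \eqref{eq:integral representation} followed by the factorization $[(1-\lambda)s^{-1}+\lambda t^{-1}]^{-1}=\int_0^\infty e^{-x(1-\lambda)s^{-1}}e^{-x\lambda t^{-1}}\,dx$ --- but then simply declares that $\alpha(s,x,\lambda)=e^{-x(1-\lambda)s^{-1}}$ and $\beta(t,x,\lambda)=e^{-x\lambda t^{-1}}$ put $\mathcal{P}_f$ into the form \eqref{eq: f(s,t)} and invokes Theorem \ref{thqt:equivalence Schur} (iii)/(iv). You are right to be suspicious of that route: $\Omega=[0,\infty)\times[0,1]$ is not a finite measure space, and the condition (iv) quantities $\int|\alpha(s,\cdot)|^2=\int_0^1\frac{s}{2(1-\lambda)}\,d\mu(\lambda)$ and $\int|\beta(t,\cdot)|^2=\int_0^1\frac{t}{2\lambda}\,d\mu(\lambda)$ diverge whenever $\mu$ charges an endpoint (the arithmetic mean, $\mu=\frac{1}{2}(\delta_0+\delta_1)$) and even for the geometric mean, whose representing measure $\frac{d\lambda}{\pi\sqrt{\lambda(1-\lambda)}}$ is not integrable against $\frac{1}{1-\lambda}$. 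Your alternative --- verifying condition (i) directly by estimating the Sylvester-type integral in trace norm, $\|\Phi_\lambda(X)\|_{\mathcal{C}_1}\le[(1-\lambda)\|A\|^{-1}+\lambda\|B\|^{-1}]^{-1}\|X\|_{\mathcal{C}_1}\le\max(\|A\|,\|B\|)\,\|X\|_{\mathcal{C}_1}$ uniformly in $\lambda$, then averaging over the probability measure $\mu$ --- both sidesteps and repairs this issue, at the modest cost of having to check that the Sylvester integral really is the double operator integral, which you correctly do on $\mathcal{C}_2$ first (this is in effect the computation of Theorem \ref{prop:another form}) before restricting to $\mathcal{C}_1$. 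The one place where you are as brisk as the paper is the passage to non-invertible $A,B$: replacing them by $A+\varepsilon I$, $B+\varepsilon I$ shifts the spectral measures, so what your uniform bound controls is $\Phi_{A,B,\varphi_\varepsilon}$ with $\varphi_\varepsilon(s,t)=\mathcal{P}_f(s+\varepsilon,t+\varepsilon)$ relative to the original pair, and you still need to pass the $\mathcal{C}_1$ bound to the limit (e.g.\ via convergence in $\mathcal{C}_2$ on $\mathcal{C}_1\cap\mathcal{C}_2$ together with lower semicontinuity of the trace norm); the paper's proof does not address this either, so it is a shared, minor loose end rather than a defect specific to your argument.
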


\begin{proof}
%It is shown in \cite{H2013} that 
%$f$ can be represented by 
%
%$$ f(x)=\int_{0}^{1}[1-\lambda +\lambda x^{-1}]^{-1}
%d\mu(\lambda) $$
%
%for a positive probably measure $d\mu$ on
%$[0,1]$. 
By \eqref{eq:integral representation},
a perspective $\mathcal{P}_{f}$ of $f$ is
given as follows:
$$ \mathcal{P}_{f}(s,t)=sf\left(\frac{t}{s}\right)=
\int_{0}^{1}
[(1-\lambda)s^{-1}+\lambda t^{-1}]^{-1}
d\mu(\lambda). $$
By elementary computation, we have
\begin{align*}
\mathcal{P}_{f}(s,t)
& =
\int_{0}^{1}
[(1-\lambda)s^{-1}+\lambda t^{-1}]^{-1}
d\mu(\lambda)\\
& =
\int_{0}^{1}
\left( \int_{0}^{\infty}
e^{-(1-\lambda)s^{-1}x}
e^{-\lambda t^{-1}x}
dx\right)d\mu(\lambda).%\\
%& =
%\int_{0}^{1}
%\int_{0}^{\infty}
%e^{-(1-\lambda)s^{-1}x}
%e^{-\lambda t^{-1}x}
%dx d\mu(\lambda)
\end{align*}
Putting $\alpha(s,x,\lambda)=e^{-(1-\lambda)s^{-1}x}$
and $\beta(t,x,\lambda)=e^{-\lambda t^{-1}x}$.
Then $\mathcal{P}_{f}$ can be represented as 
the form of \eqref{eq: f(s,t)}, 
and it is a Schur multiplier.
\end{proof}

%We notice that if $f'(1)\in (0,1)$, then the condition
%\eqref{eq:condition} is satisfied.

\section{A generalization of the 
Aluthge transformation
in the operators case}

In this section, we shall give a definition of 
a generalized Aluthge transformation
by using the double operator integrals 
which is introduced in the previous section.

\begin{definition}
[Generalization of the Aluthge transformation]
\label{defi:generalized Aluthge transformation}
Let $T=U|T| \in B(\mathcal{H})$ with the 
spectral decomposition $|T|=\int_{\sigma(|T|)}sdE_{s}$.
For an operator mean $\frak{M}_{f}$,
a  {\bf generalization of the Aluthge transformation} 
$\Delta_{\frak{M}_{f}}(T)$ of $T$ respect to
an operator mean $\frak{M}_{f}$ is defined 
by
$$ \Delta_{\frak{M}_{f}}(T):=
\int_{\sigma(|T|)}\int_{\sigma(|T|)}\mathcal{P}_{f}(s,t)
dE_{s}UdE_{t}. $$
\end{definition}

By Theorem \ref{prop:Shur multiplier},
$\mathcal{P}_{f}$ is the Schur multiplier.
Hence the above double operator integrals is 
well-defined.
As in the similar discussion of Example 1, 
we obtain concrete forms of 
generalizations of the Aluthge transformation.
The following theorem is an extension of 
Theorem \ref{prop:matrix}.

\begin{theorem}\label{prop:another form}
Let $T=U|T|\in B(\mathcal{H})$, s.t., 
$|T|\in B(\mathcal{H})^{++}$.
For each operator mean $\frak{M}$, 
there exists a positive probability measure
$d\mu$ on $[0,1]$ such that
$$ \Delta_{\frak{M}}(T)=
\int_{0}^{1}\left( \int_{0}^{\infty}
e^{-x(1-\lambda) |T|^{-1}}U
e^{-x\lambda |T|^{-1}}dx \right)d\mu(\lambda). 
$$
\end{theorem}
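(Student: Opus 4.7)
The plan is to follow the matrix argument of Corollary \ref{cor: another formula} step by step, replacing spectral sums by double operator integrals. I would start from Definition \ref{defi:generalized Aluthge transformation}, insert into the kernel $\mathcal{P}_{f}(s,t)$ the integral representation
\[
\mathcal{P}_{f}(s,t)=\int_{0}^{1}\!\!\int_{0}^{\infty}e^{-(1-\lambda)s^{-1}x}\,e^{-\lambda t^{-1}x}\,dx\,d\mu(\lambda)
\]
that was established inside the proof of Theorem \ref{prop:Shur multiplier}, and then interchange the order of integration so that the $(x,\lambda)$-integration is performed last while the double spectral integral is performed first.

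Once the interchange is made, the kernel factorizes as a product of a function of $s$ alone and a function of $t$ alone, so the inner spectral integral separates into two single integrals, each of which is evaluated by the functional calculus:
\[
\int_{\sigma(|T|)}e^{-(1-\lambda)s^{-1}x}\,dE_{s}=e^{-(1-\lambda)|T|^{-1}x},\qquad \int_{\sigma(|T|)}e^{-\lambda t^{-1}x}\,dE_{t}=e^{-\lambda|T|^{-1}x}.
\]
Reassembling and sandwiching $U$ between these two exponentials produces precisely the claimed formula.

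The main obstacle is to justify the Fubini-type exchange, since the inner object is a double operator integral rather than a Bochner integral of operators. The most efficient route is to observe that the pair $\alpha(s,x,\lambda)=e^{-(1-\lambda)s^{-1}x}$, $\beta(t,x,\lambda)=e^{-\lambda t^{-1}x}$ is exactly the factorization used in the proof of Theorem \ref{prop:Shur multiplier} to realize $\mathcal{P}_{f}$ as a Schur multiplier via Theorem \ref{thqt:equivalence Schur}(iii); consequently the bounded extension of $\Phi_{|T|,|T|,\mathcal{P}_{f}}$ to $B(\mathcal{H})$ coincides, essentially by construction, with the iterated integral in $(x,\lambda)$ of the operator $e^{-(1-\lambda)|T|^{-1}x}\,U\,e^{-\lambda|T|^{-1}x}$. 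A more hands-on justification uses the hypothesis $|T|\in B(\mathcal{H})^{++}$: from $|T|^{-1}\ge\||T|\|^{-1}I$ one obtains the uniform operator-norm estimate $\|e^{-(1-\lambda)|T|^{-1}x}\,U\,e^{-\lambda|T|^{-1}x}\|\le\|U\|\,e^{-\||T|\|^{-1}x}$, which is jointly integrable on $[0,\infty)\times[0,1]$ against $dx\,d\mu(\lambda)$ and so allows a direct Bochner--Fubini argument, weakly against rank-one functionals $\langle\,\cdot\,\xi,\eta\rangle$ if one wishes to avoid any measure-theoretic friction at the double operator integral level.
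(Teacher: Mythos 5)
Your proposal is correct and follows essentially the same route as the paper: insert the integral representation $\mathcal{P}_{f}(s,t)=\int_{0}^{1}\int_{0}^{\infty}e^{-x(1-\lambda)s^{-1}}e^{-x\lambda t^{-1}}\,dx\,d\mu(\lambda)$ into the double operator integral, apply Fubini--Tonelli to move the $(x,\lambda)$-integration outside, and evaluate the factorized spectral integrals by functional calculus. The only difference is that you spell out the justification of the Fubini exchange (via the Schur-multiplier factorization or the uniform bound $e^{-\||T|\|^{-1}x}$ from invertibility of $|T|$), which the paper leaves implicit.
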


\begin{proof}
For an operator mean $\frak{M}_{f}$, 
there exists a positive probability measure
$d\mu$ on $[0,1]$ such that
\begin{align*}
\mathcal{P}_{f}(s,t) & =
\int_{0}^{1}[(1-\lambda)s^{-1}+\lambda t^{-1}]^{-1}
d\mu(\lambda) \\
& =
\int_{0}^{1}\left(\int_{0}^{\infty}
e^{\{-x(1-\lambda)s^{-1}\}}
e^{\{-x\lambda t^{-1}\}}dx\right)d\mu(\lambda).
\end{align*}
Then by using Fubini-Tonelli's theorem, we 
have 
\begin{align*}
\Delta_{\frak{M}_{f}}(T) 
& =
\int_{\sigma(|T|)}\int_{\sigma(|T|)}
\mathcal{P}_{f}(s,t)
dE_{s}UdE_{t}\\
& =
\int_{\sigma(|T|)}\int_{\sigma(|T|)}
\left\{
\int_{0}^{1}\left(\int_{0}^{\infty}
e^{-x(1-\lambda)s^{-1}}
e^{-x\lambda t^{-1}}dx\right)d\mu(\lambda)
\right\}
dE_{s}UdE_{t}\\
& =
\int_{0}^{1}\left\{\int_{0}^{\infty}
\left(
\int_{\sigma(|T|)}e^{-x(1-\lambda)s^{-1}}
dE_{s}
\right)
U
\left(
\int_{\sigma(|T|)}
e^{-x\lambda t^{-1}}
dE_{t}
\right)dx\right\} d\mu(\lambda)\\
& =
\int_{0}^{1}\left(\int_{0}^{\infty}
e^{-x(1-\lambda)|T|^{-1}}
U
e^{-x\lambda |T|^{-1}}
dx\right)d\mu(\lambda).
\end{align*}
Hence the proof is completed.
\end{proof}

\begin{proposition}\label{prop: norm}
Let $T\in B(\mathcal{H})$ with the polar decomposition
$T=U|T|$. 
%be an invertible operator with the 
%polar decomposition $T=U|T|$, and 
%let $|T|=\int_{\sigma(|T|)}sdE_{s}$ be the 
%spectral decomposition.
Then for any operator 
mean $\frak{M}_{f}$ and $\alpha\in \mathbb{C}$, 
the following statements hold.
\begin{itemize}
\item[(1)] $\Delta_{\frak{M}_{f}}(\alpha T)=\alpha
\Delta_{\frak{M}_{f}}(T)$,
\item[(2)] $\Delta_{\frak{M}_{f}}(V^{*} TV)=V^{*}
\Delta_{\frak{M}_{f}}(T)V$ for all unitary $V$,
\item[(3)] $\Delta_{\frak{M}_{f}}(T)-\alpha I=
\Phi_{|T|,|T|,\mathcal{P}_{f}}(U-\alpha |T|^{-1})$
if $|T|\in B(\mathcal{H})^{++}$,
\item[(4)] $ \| \Delta_{\frak{M}_{f}}(T)\|\leq \|T\| $,
where $\|\cdot \|$ means the spectral norm
on $B(\mathcal{H})$.
\end{itemize}
\end{proposition}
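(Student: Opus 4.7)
My plan is to treat the four statements in order of increasing complexity. Parts (1) and (2) are formal manipulations of Definition \ref{defi:generalized Aluthge transformation}; part (3) reduces to a linearity argument via the integral formula in Theorem \ref{prop:another form}; and part (4) follows from an operator-norm estimate inside that same integral formula, together with a limiting argument when $|T|$ is singular.

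For (1), assume $\alpha\neq 0$ (the case $\alpha=0$ is trivial) and write $\alpha=|\alpha|e^{i\theta}$. The polar decomposition of $\alpha T$ is $(e^{i\theta}U)(|\alpha||T|)$, so the spectral projections of $|\alpha T|$ coincide with those of $|T|$ while the spectral values are scaled by $|\alpha|$. Plugging this into Definition \ref{defi:generalized Aluthge transformation} and using the positive homogeneity $\mathcal{P}_{f}(cs,ct)=c\,\mathcal{P}_{f}(s,t)$ for $c>0$ produces the scalar $|\alpha|e^{i\theta}=\alpha$ outside the integral. For (2), uniqueness of polar decomposition gives $|V^{*}TV|=V^{*}|T|V$ with partial isometry $V^{*}UV$; the spectral measure transforms as $dE_{s}\mapsto V^{*}\,dE_{s}\,V$, and the intermediate factors $VV^{*}=I$ appearing inside the integrand cancel, leaving $V^{*}\Delta_{\frak{M}_{f}}(T)V$.

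For (3), since $|T|\in B(\mathcal{H})^{++}$, Theorem \ref{prop:another form} lets us write $\Delta_{\frak{M}_{f}}(T)=\Phi_{|T|,|T|,\mathcal{P}_{f}}(U)$, where the map $X\mapsto \Phi_{|T|,|T|,\mathcal{P}_{f}}(X)$ is visibly linear. It therefore suffices to verify the single identity $\Phi_{|T|,|T|,\mathcal{P}_{f}}(|T|^{-1})=I$, after which linearity delivers $\Delta_{\frak{M}_{f}}(T)-\alpha I=\Phi_{|T|,|T|,\mathcal{P}_{f}}(U)-\alpha\Phi_{|T|,|T|,\mathcal{P}_{f}}(|T|^{-1})=\Phi_{|T|,|T|,\mathcal{P}_{f}}(U-\alpha|T|^{-1})$. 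To verify the identity, substitute $X=|T|^{-1}$ into the integral formula: since $|T|^{-1}$ commutes with $e^{-x(1-\lambda)|T|^{-1}}$, the two exponentials merge into $e^{-x|T|^{-1}}$, leaving $\int_{0}^{1}\int_{0}^{\infty}|T|^{-1}e^{-x|T|^{-1}}\,dx\,d\mu(\lambda)=\int_{0}^{1}I\,d\mu(\lambda)=I$, the last equality because $d\mu$ is a probability measure.

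For (4), first assume $|T|\in B(\mathcal{H})^{++}$ and estimate the integrand in Theorem \ref{prop:another form} in operator norm. Since $|T|^{-1}\geq \|T\|^{-1}I$, one has $\|e^{-x(1-\lambda)|T|^{-1}}\|\leq e^{-x(1-\lambda)/\|T\|}$ and $\|e^{-x\lambda|T|^{-1}}\|\leq e^{-x\lambda/\|T\|}$; combined with $\|U\|\leq 1$, the integrand is dominated by $e^{-x/\|T\|}$, yielding $\|\Delta_{\frak{M}_{f}}(T)\|\leq \int_{0}^{1}\int_{0}^{\infty}e^{-x/\|T\|}\,dx\,d\mu(\lambda)=\|T\|$. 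For general $T$, replace $|T|$ by $|T|+\varepsilon I$ inside the double operator integral; this is equivalent to evaluating $\Phi_{|T|,|T|,\widetilde{\mathcal{P}}_{\varepsilon}}(U)$ with the translated symbol $\widetilde{\mathcal{P}}_{\varepsilon}(s,t)=\mathcal{P}_{f}(s+\varepsilon,t+\varepsilon)$, which remains a Schur multiplier by Theorem \ref{prop:Shur multiplier}. The invertible case then gives the bound $\|T\|+\varepsilon$, and passing $\varepsilon\searrow 0$ yields the claim. The main obstacle is precisely this last step: one must justify, within the Schur-multiplier calculus of Section 3, that the double operator integral depends continuously on its symbol in a topology strong enough to preserve the operator-norm estimate in the limit.
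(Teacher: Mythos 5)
Your proposal is correct and follows essentially the same route as the paper for (1), (2) and (4): polar decomposition plus positive homogeneity of $\mathcal{P}_{f}$ for (1), the transformation rule $dE_{s}\mapsto V^{*}\,dE_{s}\,V$ for (2), and the pointwise bound $\|e^{-x(1-\lambda)|T|^{-1}}Ue^{-x\lambda|T|^{-1}}\|\leq e^{-x\|T\|^{-1}}$ integrated against the probability measure for (4). The only genuine divergence is in (3): the paper stays inside the double operator integral and uses the orthogonality of the spectral measure, namely that $dE_{s}\,dE_{t}$ collapses to the diagonal so that $\Phi_{|T|,|T|,\mathcal{P}_{f}}(|T|^{-1})=\int_{\sigma(|T|)}\mathcal{P}_{f}(s,s)\,dE_{s}\,|T|^{-1}=|T|\,|T|^{-1}=I$ via $\mathcal{P}_{f}(s,s)=sf(1)=s$, whereas you verify the same identity by substituting $X=|T|^{-1}$ into the exponential integral representation and computing $\int_{0}^{\infty}|T|^{-1}e^{-x|T|^{-1}}\,dx=I$. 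Both are valid; the paper's version avoids invoking Theorem \ref{prop:another form} for an argument other than $U$ (a harmless but unstated extension in your version), while yours is arguably more self-contained given that formula. One further remark in your favour: in (4) the paper applies Theorem \ref{prop:another form} without comment, which strictly requires $|T|\in B(\mathcal{H})^{++}$; you correctly flag that the general case needs the $\varepsilon$-regularization $|T|+\varepsilon I$ and a continuity argument in the symbol, a point the paper leaves implicit (it handles the analogous issue explicitly only in Theorems \ref{thm:fixed point} and \ref{thm:numerical range}).
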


\begin{proof}
Let $|T|=\int_{\sigma(|T|)}sdE_{s}$ be
the spectral 
decomposition.

(1) Let $\alpha=re^{i\theta}$ $(r\geq 0)$ be a polar form of 
$\alpha\in \mathbb{C}$. Then
$\alpha T=(e^{i\theta}U)(\alpha |T|)$ is a polar
decomposition of $\alpha T$ moreover, 
$r|T|=\int_{\sigma(|T|)}rs dE_{s}$. Hence we have
\begin{align*}
\Delta_{\frak{M}_{f}}(\alpha T)
& =
\int_{\sigma(|T|)}\int_{\sigma(|T|)}
\mathcal{P}_{f}(rs,rt)dE_{s}(e^{i\theta}U)dE_{t} \\
& =
\int_{\sigma(|T|)}\int_{\sigma(|T|)}
re^{i\theta}\mathcal{P}_{f}(s,t)dE_{s}UdE_{t} 
 = 
\alpha \Delta_{\frak{M}_{f}}(T).
\end{align*}

(2) Let $V$ be unitary. Then
$V^{*}TV=V^{*}UV\cdot V^{*}|T|V$ is the polar decomposition.
Then we have
\begin{align*}
 \Delta_{\frak{M}_{f}}(V^{*}TV) &=
\int_{\sigma(|T|)}\int_{\sigma(|T|)}
\mathcal{P}_{f}(s,t) d(V^{*}E_{s}V) 
V^{*}UV d(V^{*}E_{t}V)\\
& =
V^{*} \left(\int_{\sigma(|T|)}\int_{\sigma(|T|)}
\mathcal{P}_{f}(s,t) dE_{s}UdE_{t}\right)V=
V^{*}\Delta_{\frak{M}_{f}}(T)V.
\end{align*}

(3) Since $dE_{s}$ is an orthogonal projection measure,
\begin{align*}
\Phi_{|T|,|T|,\mathcal{P}_{f}}(U-\alpha |T|^{-1}) 
& =
\int_{\sigma(|T|)}\int_{\sigma(|T|)} 
\mathcal{P}_{f}(s,t)dE_{s}(U-\alpha |T|^{-1})dE_{t}\\
& =
\int_{\sigma(|T|)}\int_{\sigma(|T|)} 
\mathcal{P}_{f}(s,t)dE_{s}UdE_{t}\\
& \hspace*{60pt}
-\alpha \int_{\sigma(|T|)}\int_{\sigma(|T|)} 
\mathcal{P}_{f}(s,t)dE_{s} |T|^{-1}dE_{t}\\
& =
\Delta_{\frak{M}_{f}}(T)-
\alpha \int_{\sigma(|T|)}\int_{\sigma(|T|)} 
\mathcal{P}_{f}(s,t)dE_{s} dE_{t}|T|^{-1}\\
& =
\Delta_{\frak{M}_{f}}(T)-
\alpha \int_{\sigma(|T|)} 
\mathcal{P}_{f}(s,s)dE_{s} |T|^{-1}\\
& =
\Delta_{\frak{M}_{f}}(T)-
\alpha \int_{\sigma(|T|)} 
s dE_{s} |T|^{-1}=
\Delta_{\frak{M}_{f}}(T)-\alpha I.
\end{align*}

(4) By Theorem \ref{prop:another form}, 
there exists a positive probability measure
$d\mu$ on $[0,1]$ such that
$$ \Delta_{\frak{M}_{f}}(T)=
\int_{0}^{1}\left(\int_{0}^{\infty}
e^{-x(1-\lambda) |T|^{-1}}U
e^{-x\lambda |T|^{-1}}dx\right) d\mu(\lambda). 
$$
Since $g(x)=e^{-cx^{-1}}$ is an increasing 
function on $x>0$ and for $c>0$, we have
$e^{-cA^{-1}}\leq e^{-c\|A\|^{-1}}I$. Hence 
we have
\begin{align*}
\|\Delta_{\frak{M}_{f}}(T)\|
& \leq 
\int_{0}^{1}\left(\int_{0}^{\infty}
\|e^{-x(1-\lambda) |T|^{-1}}U
e^{-x\lambda |T|^{-1}}\|dx\right)d\mu(\lambda)\\
& \leq 
\int_{0}^{1}\left(
\int_{0}^{\infty} e^{-x(1-\lambda) \|T\|^{-1}}
e^{-x\lambda \|T\|^{-1}}dx\right)d\mu(\lambda)
= \|T\|,
\end{align*}
where the last equality follows from 
$\int_{0}^{1}d\mu(\lambda)=1$.
\end{proof}

\begin{proposition}\label{prop:trace}
Let $T\in B(\mathcal{H})$,
% be an invertible operator 
%with the polar decomposition $T=U|T|$, 
and let 
$\frak{M}$ be an operator mean.
% satisfying \eqref{eq:condition}.
If $T\in C_{1}(\mathcal{H})$, 
then 
$$ {\rm trace}(\Delta_{\frak{M}}(T))=
{\rm trace}(T). $$
\end{proposition}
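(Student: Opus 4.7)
The plan is to exploit the fact that the representing function satisfies $f(1)=1$, so the perspective satisfies $\mathcal{P}_{f}(s,s)=sf(1)=s$ on the diagonal, and that taking the trace of a double operator integral of the form $\int\!\int \varphi(s,t)\,dE_{s}\,U\,dE_{t}$ localizes to the diagonal $s=t$. Once these two facts are combined, the computation will immediately collapse to ${\rm trace}(\Delta_{\frak{M}}(T))={\rm trace}(U|T|)={\rm trace}(T)$.

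Since $T\in \mathcal{C}_{1}(\mathcal{H})$, the operator $|T|$ is itself trace class and in particular compact, so its spectrum is at most countable with $0$ as the only possible accumulation point. The first step is to write the spectral decomposition of $|T|$ discretely as $|T|=\sum_{i}s_{i}P_{i}$ with $s_{i}>0$ and $P_{i}$ finite-rank mutually orthogonal projections; the projection onto $\ker|T|$ contributes nothing since $U$ vanishes there by polar decomposition. In this discrete setting the double operator integral in Definition \ref{defi:generalized Aluthge transformation} reduces to the familiar sum from Theorem \ref{thm: double summation}, namely
$$\Delta_{\frak{M}_{f}}(T)=\sum_{i,j}\mathcal{P}_{f}(s_{i},s_{j})P_{i}UP_{j}.$$

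Next I would take the trace term by term. Using cyclicity together with the orthogonality $P_{j}P_{i}=\delta_{ij}P_{i}$, we get ${\rm trace}(P_{i}UP_{j})={\rm trace}(P_{j}P_{i}U)=\delta_{ij}\,{\rm trace}(P_{i}U)$, so only the diagonal terms survive. Substituting $\mathcal{P}_{f}(s_{i},s_{i})=s_{i}$ then gives
$${\rm trace}(\Delta_{\frak{M}_{f}}(T))=\sum_{i}\mathcal{P}_{f}(s_{i},s_{i})\,{\rm trace}(P_{i}U)=\sum_{i}s_{i}\,{\rm trace}(P_{i}U)={\rm trace}\Bigl(U\textstyle\sum_{i}s_{i}P_{i}\Bigr)={\rm trace}(U|T|)={\rm trace}(T).$$

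The main obstacle is justifying the interchange of the trace with the (possibly infinite) double sum, which requires both that $\Delta_{\frak{M}_{f}}(T)\in\mathcal{C}_{1}(\mathcal{H})$ and that the double sum converges absolutely in trace norm. The first is a consequence of the Schur multiplier property (Theorem \ref{prop:Shur multiplier}) together with the fact that double operator integration against a Schur multiplier preserves $\mathcal{C}_{1}(\mathcal{H})$ (Definition \ref{Schur multiplier}). For the absolute convergence, the pointwise bound $\mathcal{P}_{f}(s,t)\le (1-\lambda)s+\lambda t\le \max(s,t)$ which follows from \eqref{eq:condition}, combined with $\sum_{i}s_{i}\,{\rm rank}(P_{i})={\rm trace}(|T|)<\infty$ and $\|U\|\le 1$, controls the sums. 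An alternative, perhaps more transparent, route would be to invoke Theorem \ref{prop:another form} and use cyclicity on the integrand, exploiting $e^{-x(1-\lambda)|T|^{-1}}\cdot e^{-x\lambda|T|^{-1}}=e^{-x|T|^{-1}}$ together with $\int_{0}^{\infty}e^{-x|T|^{-1}}dx=|T|$; but this requires invertibility of $|T|$, which fails generically for $T\in\mathcal{C}_{1}(\mathcal{H})$ in infinite dimensions, so the discrete approach is preferable.
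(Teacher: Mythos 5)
Your proof reaches the right conclusion but by a genuinely different route from the paper's. The paper's entire proof is the one line ``It follows from Theorem \ref{prop:another form}'': one traces the integral representation, uses cyclicity of the trace to merge $e^{-x(1-\lambda)|T|^{-1}}$ and $e^{-x\lambda|T|^{-1}}$ into $e^{-x|T|^{-1}}$, and integrates $\int_{0}^{\infty}e^{-x|T|^{-1}}dx=|T|$ to recover ${\rm trace}(U|T|)={\rm trace}(T)$ --- exactly the ``alternative route'' you describe and then decline to take. Your reason for declining it is well taken: Theorem \ref{prop:another form} is stated for $|T|\in B(\mathcal{H})^{++}$, and a trace-class operator on an infinite-dimensional space never has invertible modulus, so the paper's argument needs an additional approximation step (or your argument) to cover the stated generality. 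Your route --- discrete spectral decomposition of the compact operator $|T|$, the diagonal identity $\mathcal{P}_{f}(s,s)=sf(1)=s$, and cyclicity of the trace to kill the off-diagonal blocks --- is the more robust one and works directly under the hypothesis $T\in\mathcal{C}_{1}(\mathcal{H})$.

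One technical soft spot: your justification of the interchange of trace and double sum via ``absolute convergence in trace norm'' does not quite close as stated. The bounds $\mathcal{P}_{f}(s_{i},s_{j})\leq\max(s_{i},s_{j})$ and $\|P_{i}UP_{j}\|_{1}\leq\min({\rm rank}\,P_{i},{\rm rank}\,P_{j})$ give the majorant $\sum_{i,j}\max(s_{i},s_{j})\min(r_{i},r_{j})$, which can diverge even when $\sum_{i}s_{i}r_{i}<\infty$ (already for rank-one projections with $s_{i}=2^{-i}$ the sum $\sum_{i,j}2^{-\min(i,j)}$ is infinite). The interchange is nevertheless valid; two clean repairs are available. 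Either compute ${\rm trace}(\Delta_{\frak{M}}(T))=\sum_{n}\langle\Delta_{\frak{M}}(T)e_{n},e_{n}\rangle$ in an orthonormal eigenbasis $\{e_{n}\}$ of $|T|$, where only the term $i=j=i(n)$ survives and each summand equals $s_{i(n)}\langle Ue_{n},e_{n}\rangle=\langle Te_{n},e_{n}\rangle$; or observe that the square partial sums are $E_{F}\Delta_{\frak{M}}(T)E_{F}$ with $E_{F}=\sum_{i\in F}P_{i}$, which converge to $\Delta_{\frak{M}}(T)$ in trace norm because $\Delta_{\frak{M}}(T)\in\mathcal{C}_{1}(\mathcal{H})$ and $E_{F}\to I$ strongly. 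With either repair your argument is complete.
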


\begin{proof}
It follows from Theorem \ref{prop:another form}.
\end{proof}

We notice that it is known that
if  $\frak{M}$ is a weighted geometric mean 
(i.e., $\Delta_{\frak{M}}(T)$ is the 
generalized Aluthge transformation), 
then
$\sigma(\Delta_{\frak{M}}(T))=\sigma(T)$ holds for 
all $T\in B(\mathcal{H})$ \cite{H1997}.
However, there is a counterexample 
for this equation when 
$\frak{M}$ is an arithmetic mean 
\cite{LLY2014}.

\section{Iteration}

In this section, we shall consider 
iteration of $\Delta_{\frak{M}}$.
For each natural number $n$, 
define $\Delta^n_{\frak{M}}(T):=
\Delta_{\frak{M}}(\Delta^{n-1}_{\frak{M}}(T))$
and $\Delta_{\frak{M}}^{0}(T):=T$ for $T\in B(\mathcal{H})$.
It is known that iteration of the Aluthge 
transformation 
(i.e., $\frak{M}$ is a geometric mean)
has been considered by 
many authors, for example, 
(i) a sequence 
$\{\Delta^{n}_{\frak{M}}(T)\}_{n=0}^{\infty}$
converges to a normal matrix if 
$T\in \mathcal{M}_{m}$ \cite{AY2003, APS2011},
(ii) there exists an operator $T\in B(\mathcal{H})$
such that  a sequence 
$\{\Delta^{n}_{\frak{M}}(T)\}_{n=0}^{\infty}$
does not converge in the week operator topology
\cite{CJL2005},
(iii) for each $T\in B(\mathcal{H})$,
$\lim_{n\to\infty}\| \Delta^{n}_{\frak{M}}(T)\|
=r(T)$, where $r(T)$ is the spectral radius of
$T$ \cite{Y2002}.

%In the mean transformation case 
%(i.e., the generalized Aluthge transformation 
%respect to non-weighted arithmetic mean), we can 
%prove similar arguments of (i) and (ii).

\begin{theorem}\label{thm:iteration-finite}
Let $T\in \mathcal{M}_{m}$ 
be invertible with the polar decomposition
$T=U|T|$.
Let $\frak{A}$ be a 
non-weighted arithmetic mean.
Then a sequence $\{\Delta^n_{\frak{A}}(T)\}$ 
converges to a normal matrix $N$ 
such that
${\rm trace}(T)={\rm trace}(N)$ and
${\rm trace}(|T|)={\rm trace}(|N|)$.
\end{theorem}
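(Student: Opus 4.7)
The plan is to exploit the explicit form $\Delta_{\frak{A}}(T)=\tfrac12(|T|U+U|T|)$, which follows from Example 1(1) once one observes that invertibility of $T$ makes $U$ unitary. The first step is to verify that the orbit stays inside the invertible matrices. Writing
$$|T|U+U|T|=|T|U\bigl(I+U^{*}|T|^{-1}U\,|T|\bigr),$$
the matrix $M:=U^{*}|T|^{-1}U\cdot|T|$ is a product of two positive definite matrices, hence is similar to $(U^{*}|T|^{-1}U)^{1/2}|T|(U^{*}|T|^{-1}U)^{1/2}$, so all eigenvalues of $M$ are real and strictly positive. Consequently $I+M$ is invertible and, by induction, every iterate $T_n$ is invertible and its polar part $U_n$ is unitary.

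Next I would produce a Lyapunov function. A direct expansion using $U^{*}U=UU^{*}=I$ gives
$$\|\Delta_{\frak{A}}(T)\|_{2}^{2}=\tfrac12\|T\|_{2}^{2}+\tfrac12\,\mathrm{trace}(U^{*}|T|U\,|T|)=\|T\|_{2}^{2}-\tfrac14\bigl\||T|U-U|T|\bigr\|_{2}^{2},$$
where $\|\cdot\|_{2}$ is the Hilbert--Schmidt norm and the second equality uses unitary invariance of $\|\cdot\|_2$ to rewrite $\|T\|_2^2-\mathrm{trace}(U^*|T|U\,|T|)=\tfrac12\|U^*|T|U-|T|\|_2^2$. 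Thus $\{\|T_n\|_{2}\}$ is monotone decreasing, with equality iff $[|T|,U]=0$, i.e.\ iff $T$ is normal. Telescoping yields the sharper identity $\|T_{n+1}-T_n\|_{2}^{2}=\|T_n\|_{2}^{2}-\|T_{n+1}\|_{2}^{2}$, so $\sum\|T_{n+1}-T_n\|_{2}^{2}<\infty$ and in particular $[|T_n|,U_n]\to 0$.

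To ensure subsequential limits remain invertible, so that polar decomposition is continuous there, I would track the determinant. From the factorization above together with $\det M=\det(|T|^{-1})\det(|T|)=1$, the eigenvalues $\lambda_i>0$ of $M$ satisfy $\prod\lambda_i=1$, and AM--GM gives $\det(I+M)=\prod(1+\lambda_i)\geq 2^{m}$, with equality iff $M=I$, equivalently iff $T$ is normal. Therefore $|\det T_{n+1}|=2^{-m}|\det T_n|\,\det(I+M_n)\geq|\det T_n|$, so $|\det T_n|$ is non-decreasing. Combined with $\|T_n\|\leq\|T\|$ from Proposition \ref{prop: norm}(4), the smallest singular value of $T_n$ is bounded below by $|\det T|/\|T\|^{m-1}>0$. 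Hence $\{T_n\}$ lies in a compact subset of invertible matrices; on this set the polar decomposition is continuous, so any convergent subsequence $T_{n_k}\to N$ gives $|T_{n_k}|\to|N|$ and $U_{n_k}\to U_N$ unitary, and passing to the limit in $[|T_{n_k}|,U_{n_k}]\to 0$ yields $[|N|,U_N]=0$, i.e.\ $N$ is normal.

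The main obstacle is promoting subsequential convergence to convergence of the whole sequence. Since $\|T_{n+1}-T_n\|_{2}\to 0$, a standard argument shows the $\omega$-limit set is connected, and by the previous step it lies entirely in the normal matrices and shares the common values of $\mathrm{trace}(T)$ (Proposition \ref{prop:trace}), of $L:=\lim\|T_n\|_{2}$, and of $\lim|\det T_n|$; these conservation laws alone do not force a single limit since the normal matrices with fixed trace, Hilbert--Schmidt norm, and $|\det|$ form a continuous family. The plan to collapse the $\omega$-limit set to a single point is to linearize $\Delta_{\frak{A}}$ at an arbitrary normal fixed point $N$ and show that the differential fixes the tangent space to the normal matrices (since every such matrix is a fixed point) while being strictly contractive in $\|\cdot\|_{2}$ transversally; combined with subsequential convergence to $N$ and $\|T_{n+1}-T_n\|_{2}\to 0$, this yields $\|T_n-N\|_{2}\to 0$. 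The equality $\mathrm{trace}(N)=\mathrm{trace}(T)$ is then immediate from Proposition \ref{prop:trace}.
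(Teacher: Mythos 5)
Your steps through the subsequential analysis are correct and nicely executed: the factorization $|T|U+U|T|=|T|U(I+M)$ with $M=U^{*}|T|^{-1}U\,|T|$ does keep the orbit invertible, the identity $\|\Delta_{\frak{A}}(T)\|_{2}^{2}=\|T\|_{2}^{2}-\tfrac14\||T|U-U|T|\|_{2}^{2}$ is right and gives $\sum_n\|T_{n+1}-T_n\|_2^2<\infty$, and the determinant/AM--GM bound legitimately confines the orbit to a compact set of invertible matrices on which polar decomposition is continuous, so every subsequential limit is normal. But the decisive step --- that the \emph{whole} sequence converges --- is not proved; it is only announced as a ``plan'' to linearize $\Delta_{\frak{A}}$ at a normal fixed point and show transversal strict contractivity. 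That is a genuine gap, and not a small one: since every normal matrix is a fixed point, the fixed-point set is a continuum, your conservation laws do not isolate a limit (as you concede), and the linearization argument you sketch is exactly the hard part. Indeed, at a normal point whose unitary part has repeated or antipodal eigenvalues the differential acquires eigenvalues of modulus $1$ or $0$ in directions that are not obviously tangent to the fixed-point set, and even granting a spectral gap, upgrading ``subsequential convergence plus $\|T_{n+1}-T_n\|_2\to0$'' to convergence requires a genuine local-attractor or \L{}ojasiewicz-type argument. This is precisely the difficulty that made the analogous statement for the Aluthge transform (Antezana--Pujals--Stojanoff) a substantial theorem, so it cannot be waved through.

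The paper takes a completely different and much shorter route that avoids the dynamical issue altogether: by Theorem \ref{thqt:formula of iteration} the polar part of $\Delta^{n}_{\frak{A}}(T)$ is the \emph{same} $U$ for every $n$, and $|\Delta^{n}_{\frak{A}}(T)|=\frac{1}{2^{n}}\sum_{k=0}^{n}\binom{n}{k}(U^{*})^{k}|T|U^{k}$ in closed form. Diagonalizing $U=V^{*}DV$ turns this sum into the Schur product $\bigl[\bigl(\tfrac{1+e^{(\theta_j-\theta_i)\sqrt{-1}}}{2}\bigr)^{n}\bigr]\circ P$ with $P=V|T|V^{*}$, and each entry converges because $\bigl|\tfrac{1+e^{i\phi}}{2}\bigr|<1$ unless $\phi\in2\pi\mathbb{Z}$; normality of the limit and trace preservation then drop out. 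If you want to salvage your approach, the missing ingredient is exactly this closed form for the iterates (equivalently, the observation that the unitary part is invariant under the mean transform of an invertible matrix); without it, your argument establishes only that all accumulation points are normal.
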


The iteration of mean 
transformation has been considered 
in \cite{arXivCCM2018}. However, 
in \cite{arXivCCM2018},
the authors have considered only
rank one operators or they required some
conditions.
We notice that if $T$ is invertible,
then $\Delta_{\frak{A}}(T)$ coincides with 
the mean transformation $\hat{T}$ of $T$ 
(see Example 1).

To prove Theorem \ref{thm:iteration-finite}, 
we use a useful formula which is 
shown in \cite{arXivCCM2018}.

\begin{thqt}[\cite{arXivCCM2018}]
\label{thqt:formula of iteration}
Let $T\in \mathcal{B(H)}$ and suppose that
$\ker(T^{*})\subseteq \ker(T)$. Let 
$T=U|T|$ be the canonical polar decomposition of $T$,
and let $n\in \mathbb{N}$. 
Then
%for a non-weighted arithmetic mean
%$\frak{A}$, 
the polar decomposition of $\hat{T}^{(n)}$
%$\Delta^{n}_{\frak{A}}(T)$ 
is
$$ \hat{T}^{(n)}=
U\cdot \frac{1}{2^{n}} \sum_{k=0}^{n}
\begin{pmatrix} n \\ k \end{pmatrix} 
(U^{*})^{k}|T|U^{k}, $$
where $\hat{T}^{(n)}:=\hat{(T^{(n-1)})}$ and 
$\hat{T}^{(1)}:=\hat{T}$.
\end{thqt}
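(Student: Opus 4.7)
The plan is to prove the statement by induction on $n$, with the case $n=1$ providing the core identity and subsequent steps following via Pascal's rule. For the base case $n = 1$, note that the perspective of the non-weighted arithmetic mean is $\mathcal{P}_f(s,t) = \tfrac{s+t}{2}$, so Definition~4.1 splits as $\Delta_\mathfrak{A}(T) = \tfrac{1}{2}|T|U + \tfrac{1}{2}U|T|$ (using $\int dE_s = I$ and $\int s\,dE_s = |T|$). To rewrite this in the claimed form, I would invoke the hypothesis $N(T^*) \subseteq N(T)$, which is equivalent to $R(U^*U) \subseteq R(UU^*)$, so that the initial space of $U$ lies inside its final space. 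Since $R(|T|U)$ is contained in the initial space and hence, by hypothesis, in the final space, the final projection $UU^*$ acts as the identity on $|T|U$, giving $UU^*|T|U = |T|U$; hence $U \cdot \tfrac{|T| + U^*|T|U}{2} = \tfrac{1}{2}(U|T| + |T|U) = \Delta_\mathfrak{A}(T)$. To verify this is the canonical polar decomposition, positivity of $\tfrac{|T| + U^*|T|U}{2}$ is immediate, and its null space equals $N(|T|) \cap N(U^*|T|U)$; a short calculation shows the second set contains the first (if $|T|x = 0$ then $Ux = 0$, so $U^*|T|Ux = 0$), whence the null space is $N(|T|) = N(T) = N(U)$, matching the required initial space.

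For the inductive step, assume $\Delta^n_\mathfrak{A}(T) = U|T|_n$ is the canonical polar decomposition, where $|T|_n := \tfrac{1}{2^n}\sum_{k=0}^{n}\binom{n}{k}(U^*)^k|T|U^k$. I would first check that the hypothesis propagates: computing $N(\Delta^n_\mathfrak{A}(T)^*) = N(|T|_n U^*) = N(UU^*)$ (the complement of the final space) and $N(\Delta^n_\mathfrak{A}(T)) = N(U|T|_n) = N(T) = N(U)$ (the complement of the initial space), the required inclusion reduces once more to the initial space being contained in the final space. Applying the base case to $\Delta^n_\mathfrak{A}(T)$ gives $\Delta^{n+1}_\mathfrak{A}(T) = U \cdot \tfrac{|T|_n + U^*|T|_n U}{2}$; expanding both summands, reindexing the second via $j = k+1$, and invoking $\binom{n}{k} + \binom{n}{k-1} = \binom{n+1}{k}$ collapses the combined sum into $|T|_{n+1}$.

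The main obstacle is ensuring at every induction step that $U|T|_n$ really is the canonical polar decomposition, i.e., that $|T|_n$ is positive with $N(|T|_n) = N(U)$. Positivity is clear since each $(U^*)^k|T|U^k$ is positive with nonnegative coefficients. For the null space, $\langle|T|_n x, x\rangle = \tfrac{1}{2^n}\sum_k \binom{n}{k}\||T|^{1/2}U^k x\|^2$ vanishes iff $|T|U^k x = 0$ for every $k$; the $k=0$ term forces $x \in N(T) = N(U)$, which then makes $U^k x = 0$ for $k \geq 1$ automatic, so $N(|T|_n) = N(T)$ as required, closing the induction.
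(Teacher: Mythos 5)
The paper itself contains no proof of this statement: it is imported verbatim as Theorem~\ref{thqt:formula of iteration} from \cite{arXivCCM2018} and used as a black box in the proof of Theorem~\ref{thm:iteration-finite}, so the only question is whether your argument stands on its own --- and it does. The base case correctly isolates the one place where the hypothesis $N(T^{*})\subseteq N(T)$ enters: it puts the initial space of $U$ inside its final space, whence $UU^{*}|T|U=|T|U$ and $\tfrac12(U|T|+|T|U)=U\cdot\tfrac12\left(|T|+U^{*}|T|U\right)$, and your canonicity check $N\left(\tfrac12(|T|+U^{*}|T|U)\right)=N(|T|)=N(U)$ is exactly what uniqueness of the canonical polar decomposition requires. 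The inductive step is also complete: you verify that the hypothesis propagates, since $N(\Delta^{n}_{\mathfrak{A}}(T))=N(U)$ and $N\left((\Delta^{n}_{\mathfrak{A}}(T))^{*}\right)=N(U^{*})$ reduce the needed inclusion to $N(U^{*})\subseteq N(U)$, which is the original assumption (crucially, the partial isometry stays equal to $U$ at every stage, which is what makes the closed-form sum possible), and Pascal's rule then collapses the two sums. One point worth making explicit at the start: the identification $\Delta_{\mathfrak{A}}(T)=\tfrac12(|T|U+U|T|)$ from Definition~\ref{defi:generalized Aluthge transformation} uses $E(\sigma(|T|))=I$ for the spectral measure of $|T|$, with the perspective $(s+t)/2$ extended continuously to $s,t\geq 0$; under that reading $\Delta_{\mathfrak{A}}(T)$ is precisely the mean transform of \cite{arXivCCM2018} (this differs from the matrix convention of Example~1, where the spectral projections sum to $U^{*}U$ rather than $I$), so your proof does prove the quoted statement.
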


\begin{proof}
[Proof of Theorem \ref{thm:iteration-finite}]
Since $T$ is invertible,
$\Delta_{\frak{A}}(T)$ coincides with 
the mean transformation $\hat{T}$.
Moreover $\Delta_{\frak{A}}(T)=U\cdot 
\frac{1}{2}(|T|+U^{*}|T|U)$ is the polar decomposition
by Theorem \ref{thqt:formula of iteration}.
We notice that since $T$ is invertible,
$U$ and $|T|$ are invertible, and hence 
$\Delta_{\frak{A}}(T)$ is invertible.
Therefore for each natural number $n$,
$\Delta^{(n)}_{\frak{A}}(T)=\hat{T}^{(n)}$ holds, 
and it is invertible.
By Theorem \ref{thqt:formula of iteration}, 
we only prove that 
$\{|\Delta^n_{\frak{A}}(T)|\}$ converges
to a positive matrix.
Since $U$ is unitary, we can diagonalize $U$ as 
$ U=V^{*} DV$ with a unitary matrix $V$ and 
$D={\rm diag}(e^{\theta_{1}\sqrt{-1}},...,
e^{\theta_{m}\sqrt{-1}})$.
Then by Proposition \ref{prop: norm} (2), 
%
%\begin{align*}
% \Delta_{\frak{A}}(T) & =
%\frac{U|T|+|T|U}{2} \\
%& =
%\frac{V^{*}DV |T|V^{*}V +V^{*}V|T|V^{*}DV}{2} \\
%& =
%V^{*} \frac{DV |T|V^{*} +V|T|V^{*}D}{2}V 
% =
%V^{*} \Delta_{\frak{A}}(DV |T|V^{*} )V. 
%\end{align*}
%%
%We notice that  for every $T\in \mathcal{M}_{m}$ and 
%unitary $U$, we have
%%
%$$ \Delta_{\frak{A}}(U^{*}TU)=U^{*}\Delta_{\frak{A}}(T)U.$$
%
$\Delta_{\frak{A}}^{n}(T)=
V^{*} \Delta_{\frak{A}}^{n} (DV |T|V^{*} )V$, and 
\begin{align*}
|\Delta^n_{\frak{A}}(T)| & =
V^{*} |\Delta_{\frak{A}}^{n} (DV |T|V^{*} )|V \\
& =
V^{*} \left\{ \frac{1}{2^{n}}
\sum_{k=0}^{n}\begin{pmatrix} n \\ k \end{pmatrix}
{(D^{*})}^{k}V|T|V^{*}D^{k} \right\}  V.
\end{align*}
Let $V|T|V^{*}=P$. Then 
${D^{*}}^{k}V|T| V^{*}D^{k}=
[e^{k(\theta_{j}-\theta_{i})\sqrt{-1}}]\circ P$,
where 
$[e^{k(\theta_{j}-\theta_{i})\sqrt{-1}}]\in
\mathcal{M}_{m}$ with the $(i,j)$-entry 
$e^{k(\theta_{j}-\theta_{i})\sqrt{-1}}$, 
and $\circ$ means the Hadamard product.
Hence
\begin{align*}
\frac{1}{2^{n}}
\sum_{k=0}^{n}\begin{pmatrix} n \\ k \end{pmatrix}
{D^{*}}^{k}V|T|V^{*}D^{k} 
& =
\frac{1}{2^{n}} \left[
\sum_{k=0}^{n} \begin{pmatrix} n \\ k \end{pmatrix}
e^{k(\theta_{j}-\theta_{i})\sqrt{-1}}\right]\circ P\\
& =
\frac{1}{2^{n}} \left[
(1+e^{(\theta_{j}-\theta_{i})\sqrt{-1}})^{n}\right]\circ P\\
& \hspace*{3cm} \mbox{(by the binomial expansion)}\\
& =
\left[
\left(\frac{1+e^{(\theta_{j}-\theta_{i})\sqrt{-1}}}
{2}\right)^{n}\right]\circ P.
\end{align*}
We notice that
$ \left| 
\frac{1+e^{(\theta_{j}-\theta_{i})\sqrt{-1}}}{2}\right|<1$ 
if $\theta_{j}\neq \theta_{i}+2k\pi$ for all 
integers $k$, and
$  \frac{1+e^{(\theta_{j}-\theta_{i})\sqrt{-1}}}{2}=1$ 
if $\theta_{j}= \theta_{i}+2k\pi$ for some integers $k$.
Hence there exists a matrix $P_{0}$ such that
$$ \lim_{n\to \infty} |\Delta^n_{\frak{A}}(T)|=
\lim_{n\to \infty}
V^{*} \left(\left[
\left(
\frac{1+e^{(\theta_{j}-\theta_{i})\sqrt{-1}}}{2}
\right)^{n}
\right]\circ P\right) V=P_{0}, $$
and we have $\lim_{n\to \infty}
\Delta^n_{\frak{A}}(T)=
\lim_{n\to \infty}U|\Delta^n_{\frak{A}}(T)|=
UP_{0}=N$. Moreover since
$UP_{0}=\Delta_{\frak{A}}(UP_{0})
=\frac{UP_{0}+P_{0}U}{2}$, 
$UP_{0}=P_{0}U$ holds, i.e.,  
$N=UP_{0}$ is a normal matrix.

By Proposition \ref{prop:trace} and the above,
we have 
${\rm trace}(T)={\rm trace}(\Delta^n_{\frak{A}}(T))
={\rm trace}(N)$ and
${\rm trace}(|T|)=
{\rm trace}(|\Delta^n_{\frak{A}}(T))|
={\rm trace}(|N|)$ 
for all $n=0,1,2,...$.
\end{proof}

\begin{theorem}\label{cor:converge3}
Let $\frak{M}_{f}$ be an operator mean
whose representing function satisfies 
$\lambda=f'(1)\in (0,1)$.
%\eqref{eq:condition}.
Then there exists an operator $T\in B(\mathcal{H})$ 
such that
$ \{\Delta_{\frak{M}_{f}}^{n}(T)\} $
does not converge in the week operator topology.
\end{theorem}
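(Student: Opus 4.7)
The plan is to take $T$ to be a suitable weighted unilateral shift on $\ell^{2}(\mathbb{N})$. Let $\{e_{j}\}_{j\geq 0}$ be the standard basis and let $(w_{j})_{j\geq 0}$ be a sequence with $w_{j}\in [\alpha,\beta]$ for some fixed $0<\alpha<\beta$; then $Te_{j}=w_{j}e_{j+1}$ has polar decomposition $T=U|T|$ with $U$ the unilateral shift and $|T|=\mathrm{diag}(w_{0},w_{1},\ldots)$. Applying Definition \ref{defi:generalized Aluthge transformation} and the fact that $P_{i}UP_{j}=0$ unless $i=j+1$, one checks that $\Delta_{\frak{M}_{f}}(T)$ is again a weighted shift (with polar decomposition of the form $(U,|\Delta_{\frak{M}_{f}}(T)|)$) whose weights are $w_{j}'=\mathcal{P}_{f}(w_{j+1},w_{j})$. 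Iterating, $\Delta_{\frak{M}_{f}}^{n}(T)$ is the weighted shift with weights $w_{j}^{(n)}$ produced by $w_{j}^{(0)}=w_{j}$ and $w_{j}^{(n+1)}=\mathcal{P}_{f}(w_{j+1}^{(n)},w_{j}^{(n)})$. Since $\langle \Delta_{\frak{M}_{f}}^{n}(T)e_{0},e_{1}\rangle=w_{0}^{(n)}$, it suffices to construct $(w_{j})$ so that the scalar sequence $(w_{0}^{(n)})_{n}$ does not converge in $\mathbb{R}$.

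Set $\lambda=f'(1)\in(0,1)$ and write $\mathcal{P}_{A}(s,t)=(1-\lambda)s+\lambda t$ and $\mathcal{P}_{H}(s,t)=[(1-\lambda)/s+\lambda/t]^{-1}$ for the perspectives of the $\lambda$-weighted arithmetic and harmonic means. Condition \eqref{eq:condition} gives $\mathcal{P}_{H}\leq\mathcal{P}_{f}\leq\mathcal{P}_{A}$ pointwise on $(0,\infty)^{2}$, and since $\mathcal{P}_{f}$ is monotone increasing in each argument (as follows from the integral representation \eqref{eq:integral representation} of $f$), a straightforward induction on $n$ yields
\begin{equation*}
w_{0}^{(n),H}\;\leq\;w_{0}^{(n),f}\;\leq\;w_{0}^{(n),A},
\end{equation*}
where the superscripts label the iterates built from the corresponding means. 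The bracketing recursions are linear in $w_{k}$ (respectively in $1/w_{k}$), so expanding yields
\begin{equation*}
w_{0}^{(n),A}=\sum_{k=0}^{n}\binom{n}{k}(1-\lambda)^{k}\lambda^{n-k}w_{k},\qquad\frac{1}{w_{0}^{(n),H}}=\sum_{k=0}^{n}\binom{n}{k}(1-\lambda)^{k}\lambda^{n-k}\frac{1}{w_{k}}.
\end{equation*}
Both are expectations against the binomial law $\mathrm{Bin}(n,1-\lambda)$, which by Chernoff's inequality concentrates on the window $I_{n}:=\{k:|k-(1-\lambda)n|\leq C\sqrt{n}\}$ up to a tail mass $\varepsilon(C)$ that tends to $0$ as $C\to\infty$, uniformly in $n$.

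To finish, choose a rapidly increasing sequence $n_{1}<n_{2}<\cdots$ so that the windows $I_{n_{m}}$ are pairwise disjoint (any super-exponential growth suffices), and define $w_{k}=\alpha$ for $k\in\bigcup_{m}I_{n_{2m}}$, $w_{k}=\beta$ for $k\in\bigcup_{m}I_{n_{2m+1}}$, and $w_{k}\in[\alpha,\beta]$ arbitrarily elsewhere. When $n=n_{2m}$, the binomial mass is concentrated on $I_{n_{2m}}$, where $w_{k}\equiv\alpha$; this forces both $w_{0}^{(n),A}$ and $w_{0}^{(n),H}$ to tend to $\alpha$ as $m\to\infty$, and by the squeeze $w_{0}^{(n_{2m}),f}\to\alpha$. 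Analogously $w_{0}^{(n_{2m+1}),f}\to\beta$, so $(w_{0}^{(n)})_{n}$ has two distinct accumulation points and $\Delta_{\frak{M}_{f}}^{n}(T)$ cannot converge in the weak operator topology. The main technical obstacle I anticipate is the squeeze step: one must verify that the pointwise inequalities $\mathcal{P}_{H}\leq\mathcal{P}_{f}\leq\mathcal{P}_{A}$ genuinely propagate through the iteration (which crucially uses monotonicity of $\mathcal{P}_{f}$ in \emph{both} arguments, not just one), and then extract concentration estimates from the bracketing binomial sums that are uniformly strong enough to drag $(w_{0}^{(n),f})$ close to $\alpha$ along the even subsequence and close to $\beta$ along the odd one.
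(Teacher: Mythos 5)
Your proposal is correct and shares the paper's overall skeleton: realize $T$ as a weighted unilateral shift, observe that $\Delta_{\frak{M}_{f}}$ maps weighted shifts to weighted shifts with new weights $\mathcal{P}_{f}(w_{j+1},w_{j})$, squeeze the first weight between the $\lambda$-weighted arithmetic and harmonic iterates via \eqref{eq:condition}, and use the explicit binomial formulas for those two iterates (the paper's Lemma \ref{lem:concrete form}) to manufacture two distinct subsequential limits of $\langle \Delta_{\frak{M}_{f}}^{n}(T)e_{0},e_{1}\rangle$. Where you genuinely diverge is in how the weight sequence is built. The paper proceeds adaptively: Proposition \ref{prop:converge1} shows that if all but finitely many weights equal $b$ then the first arithmetic and harmonic weights converge to $b$, and Proposition \ref{prop:converge2} then constructs $\alpha$ block by block, at each stage running the iteration long enough to get within $2^{-k}$ of $a$ or $b$ before committing to the next block. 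You instead fix the weight sequence in advance by exploiting Chernoff concentration of $\mathrm{Bin}(n,1-\lambda)$ on windows of width $O(\sqrt{n})$ around $(1-\lambda)n$, painting alternate windows with $\alpha$ and $\beta$. Your route is more quantitative and non-adaptive, at the cost of a small imprecision: with a fixed window constant $C$ the even and odd subsequential limits are only within $O(\varepsilon(C))$ of $\alpha$ and $\beta$ rather than equal to them, which still suffices for non-convergence (or one lets $C$ grow with $m$). You are also right, and more explicit than the paper, that the squeeze $w_{0}^{(n),H}\leq w_{0}^{(n),f}\leq w_{0}^{(n),A}$ requires an induction using monotonicity of the perspectives in both arguments, a point the paper passes over when it asserts $\beta_{0}^{(n)}\leq\gamma_{0}^{(n)}\leq\alpha_{0}^{(n)}$ directly from \eqref{eq:condition}.
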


To prove Theorem \ref{cor:converge3},
we prepare the following discussion.
It is a modification of \cite[Corollary 3.3]{CJL2005}:
Let $\alpha:=(\alpha_{0},\alpha_{1},\alpha_{2},...)$
be a sequence in $\ell^{\infty}$, and let 
$W_{\alpha}$ be a weighted unilateral shift on
$\ell^{2}$ with 
a weight sequence $\alpha$, i.e., 
$$ W_{\alpha}e_{n}=\alpha_{n}e_{n+1}, $$
where $\{e_{n}\}$ be the canonical basis 
of $\ell^{2}$. In what follows,
$\alpha_{n}>0$ for all $n=0,1,2,...$.

\begin{lemma}\label{lem:concrete form}
Let $f,g$ be representing functions of 
weighted arithmetic and harmonic means 
with a weight $\lambda\in [0,1]$, respectively,
and let
$\alpha=(\alpha_{0},\alpha_{1},\alpha_{2},...)$
be a sequence in $\ell^{\infty}$.
Then 
\begin{align*}
\mathcal{P}_{f}(\alpha_{1}^{(n)}, \alpha_{0}^{(n)})
& =
\sum_{j=0}^{n+1}\begin{pmatrix} n+1 \\ j 
\end{pmatrix} \lambda^{n+1-j}(1-\lambda)^{j}\alpha_{j},\\
\mathcal{P}_{g}(\beta_{1}^{(n)}, \beta_{0}^{(n)})
& =
\left[\sum_{j=0}^{n+1}\begin{pmatrix} n+1 \\ j 
\end{pmatrix} \lambda^{n+1-j}
(1-\lambda)^{j}\alpha_{j}^{-1}\right]^{-1},
\end{align*}
where 
$\alpha^{(n)}:=(\alpha_{0}^{(n)},\alpha_{1}^{(n)},...)$ and 
$\beta^{(n)}:=(\beta_{0}^{(n)},\beta_{1}^{(n)},...)$
are 
\begin{align*}
\alpha_{k}^{(n)} & = \mathcal{P}_{f}(\alpha_{k+1}^{(n-1)},
\alpha_{k}^{(n-1)}),\\
\beta_{k}^{(n)} & = \mathcal{P}_{g}(\beta_{k+1}^{(n-1)},
\beta_{k}^{(n-1)})
\end{align*}
and
$
\alpha_{k}^{(0)}=\beta_{k}^{(0)}=\alpha_{k}.
$
\end{lemma}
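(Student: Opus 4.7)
The plan is to reduce both identities to a single Pascal-type recursion. Since the arithmetic representing function is $f(x)=1-\lambda+\lambda x$ and the harmonic one is $g(x)=[1-\lambda+\lambda x^{-1}]^{-1}$, their perspectives at scalars $s,t>0$ are
$$\mathcal{P}_{f}(s,t)=(1-\lambda)s+\lambda t,\qquad \mathcal{P}_{g}(s,t)=[(1-\lambda)s^{-1}+\lambda t^{-1}]^{-1}.$$
Setting $\gamma_{k}^{(n)}:=(\beta_{k}^{(n)})^{-1}$ turns the harmonic recursion into
$$\gamma_{k}^{(n)}=(1-\lambda)\gamma_{k+1}^{(n-1)}+\lambda\gamma_{k}^{(n-1)},\qquad \gamma_{k}^{(0)}=\alpha_{k}^{-1},$$
which has the identical form to the arithmetic recursion $\alpha_{k}^{(n)}=(1-\lambda)\alpha_{k+1}^{(n-1)}+\lambda\alpha_{k}^{(n-1)}$ with initial data $\alpha_{k}^{(0)}=\alpha_{k}$. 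Hence it is enough to treat the arithmetic case and invert to get the harmonic one.

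For the arithmetic side, I will prove by induction on $n\geq 0$ the closed-form
$$\alpha_{k}^{(n)}=\sum_{j=0}^{n}\binom{n}{j}\lambda^{n-j}(1-\lambda)^{j}\alpha_{k+j}.$$
The base case $n=0$ is trivial, and the inductive step expands $(1-\lambda)\alpha_{k+1}^{(n-1)}+\lambda\alpha_{k}^{(n-1)}$ via the induction hypothesis, reindexes the first sum by $j\mapsto j-1$, and collects coefficients using Pascal's identity $\binom{n-1}{j-1}+\binom{n-1}{j}=\binom{n}{j}$. Specializing at $k=0$ and replacing $n$ by $n+1$, together with the observation that $\mathcal{P}_{f}(\alpha_{1}^{(n)},\alpha_{0}^{(n)})=(1-\lambda)\alpha_{1}^{(n)}+\lambda\alpha_{0}^{(n)}=\alpha_{0}^{(n+1)}$, yields the first claimed formula.

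For the harmonic side, applying the same closed-form to $\gamma_{k}^{(n)}$ gives
$$\gamma_{0}^{(n+1)}=\sum_{j=0}^{n+1}\binom{n+1}{j}\lambda^{n+1-j}(1-\lambda)^{j}\alpha_{j}^{-1},$$
and since $\mathcal{P}_{g}(\beta_{1}^{(n)},\beta_{0}^{(n)})=[(1-\lambda)\gamma_{1}^{(n)}+\lambda\gamma_{0}^{(n)}]^{-1}=[\gamma_{0}^{(n+1)}]^{-1}$, the second identity follows immediately. There is no real obstacle here beyond routine bookkeeping; the only thing to watch is that the weight $(1-\lambda,\lambda)$ in $\mathcal{P}_{f}$ is applied to the ordered pair $(\alpha_{k+1}^{(n-1)},\alpha_{k}^{(n-1)})$, so the $(1-\lambda)$-weight multiplies the shifted-up index, which is precisely what makes the binomial sum involve $\alpha_{k+j}$ rather than $\alpha_{k-j}$.
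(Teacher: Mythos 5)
Your proof is correct and follows essentially the same route as the paper, whose entire proof is the one-line remark that the lemma ``follows from the mathematical induction''; you have simply supplied the details (the closed form for $\alpha_{k}^{(n)}$, the Pascal-identity step, and the reduction of the harmonic case to the arithmetic one via $\gamma_{k}^{(n)}=(\beta_{k}^{(n)})^{-1}$, which is legitimate since the paper assumes $\alpha_{n}>0$ for all $n$). No issues.
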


\begin{proof}
The proof follows from the mathematical induction.
\end{proof}

\begin{lemma}\label{prop:converge0}
Let $\frak{M}_{f}$ be an operator mean, and
let $W_{\alpha}$ be a weighted unilateral shift
with a weight sequence $\alpha$. 
Then 
$\Delta^{n}_{\frak{M}}(W_{\alpha})$ 
is also a weighted unilateral shift $W_{\alpha'}$
with a weight sequence
$ \alpha'=(\alpha_{0}',\alpha_{1}',...), $
where 
$ \alpha_{n}'=\mathcal{P}_{f}(\alpha_{n+1},
\alpha_{n}). $ $(n=0,1,2,...)$.
\end{lemma}

\begin{proof}
Let $\{e_{n}\}$ be the canonical basis of $\ell^{2}$.
Then $W_{a}$ can be represented to
$$ W_{\alpha}=
\begin{pmatrix} 0 \\
\alpha_{0} & 0  \\
 & \alpha_{1} & 0  \\
& & \alpha_{2} & 0 \\
& & & \ddots & \ddots 
\end{pmatrix}
$$ 
respect to $\{e_{n}\}$, and 
we have the spectral decomposition 
$|W_{\alpha}|=\sum_{n=0}^{\infty}
\alpha_{n}P_{n}$, where 
every $P_{n}=(p_{ij})$ is a projection satisfying
$$p_{ij}=\left\{ \begin{array}{ll}
1 & \mbox{$(i=j=n)$}  \\
0 & \mbox{(otherwise)}
\end{array}\right. .$$
Also assume that $W_{\alpha}=U|W_{\alpha}|$
is the polar decomposition. Then $U$ is a 
unilateral shift.
%
%$$ \Delta_{\frak{M}_{f}}(W_{\alpha})=
%\sum_{i,j=0}^{n} 
%\mathcal{P}_{f}(\alpha_{i}, \alpha_{j})P_{i}UP_{j}. $$
%
Thus for $n=0,1,...$, 
%
%$P_{i+1}UP_{i}e_{i}=P_{i+1}Ue_{i}=P_{i+1}e_{i+1}=
%e_{i+1}=UP_{i}e_{i}$, and hence
%
\begin{align*}
\Delta_{\frak{M}_{f}}(W_{\alpha})e_{n}
& =
\sum_{i,j=0}^{\infty}\mathcal{P}_{f}
(\alpha_{i},\alpha_{j})P_{i}UP_{j}e_{n} \\
& =
\sum_{i=0}^{\infty}\mathcal{P}_{f}
(\alpha_{i},\alpha_{n})P_{i}UP_{n}e_{n} \\
& =
\sum_{i=0}^{\infty}\mathcal{P}_{f}
(\alpha_{i},\alpha_{n})P_{i}Ue_{n} \\
& =
\sum_{i=0}^{\infty}\mathcal{P}_{f}
(\alpha_{i},\alpha_{n})P_{i}e_{n+1} \\
& =
\mathcal{P}_{f}
(\alpha_{n+1},\alpha_{n})P_{n+1}e_{n+1} 
 =
\mathcal{P}_{f}
(\alpha_{n+1},\alpha_{n})e_{n+1}, 
%& =
%\mathcal{P}_{f}
%(\alpha_{n+1},\alpha_{n})Ue_{n} =W_{\alpha^{(1)}}e_{n},
\end{align*}
i.e., $\Delta_{\frak{M}_{f}}(W_{\alpha})$ is also
a weighted unilateral shift with a weight sequence
$\alpha'$.
\end{proof}

\begin{proposition}\label{prop:converge1}
Let $\frak{A},\frak{H}$ be $\lambda$-
weighted arithmetic and harmonic means with 
a weight $\lambda\in (0,1)$, respectively.
Suppose that $a$ and $b$ are any distinct positive 
real numbers. Let $W_{\alpha}$ be a 
unilateral weighted shift whose weights are either 
$a$ or $b$. Suppose that only finitely many weights
of $W_{\alpha}$ are equal to $a$. Then 
the sequences of the first weights of
$\Delta^{n}_{\frak{A}}(T)$ and 
$\Delta^{n}_{\frak{H}}(T)$ converge to $b$.
\end{proposition}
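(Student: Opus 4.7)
The plan is to apply Lemma~\ref{lem:concrete form} in order to reduce the question to an elementary limiting statement about binomial convolutions. A weighted unilateral shift $W_\alpha$ has polar decomposition $U|W_\alpha|$ with $|W_\alpha|$ diagonal and $U$ the unweighted shift, so the iterates $\Delta_{\frak{A}}^{n}(W_\alpha)$ and $\Delta_{\frak{H}}^{n}(W_\alpha)$ remain weighted shifts whose weights satisfy the recursions in the lemma. The first step is to observe that, by the displayed identities in that lemma applied iteratively, the first weight after $n$ iterations has the explicit form
\begin{equation*}
\alpha_{0}^{(n)} \;=\; \sum_{j=0}^{n}\binom{n}{j}\lambda^{n-j}(1-\lambda)^{j}\alpha_{j}
\end{equation*}
for the arithmetic mean, with the analogous identity for $(\beta_{0}^{(n)})^{-1}$ in terms of $\alpha_{j}^{-1}$ in the harmonic-mean case.

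Next I would exploit the hypothesis that only finitely many $\alpha_{j}$ equal $a$. Choosing $N$ so that $\alpha_{j}=b$ for all $j>N$, and using the normalization $\sum_{j=0}^{n}\binom{n}{j}\lambda^{n-j}(1-\lambda)^{j}=1$ from the binomial theorem, I would rewrite
\begin{equation*}
\alpha_{0}^{(n)} - b \;=\; \sum_{j=0}^{N}\binom{n}{j}\lambda^{n-j}(1-\lambda)^{j}(\alpha_{j}-b) \qquad (n>N),
\end{equation*}
so that only finitely many terms remain, each with an exponentially decaying coefficient.

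The final step is the routine estimate: for each fixed $j\le N$, the coefficient $\binom{n}{j}\lambda^{n-j}(1-\lambda)^{j}$ is bounded by a polynomial of degree $j$ in $n$ times $\lambda^{n}$ times a constant depending on $\lambda$ and $j$, and so tends to $0$ as $n\to\infty$ because $\lambda\in(0,1)$. This yields $\alpha_{0}^{(n)}\to b$. The harmonic-mean case is handled identically, working with $\alpha_{j}^{-1}$ and $b^{-1}$ in place of $\alpha_{j}$ and $b$, and concluding via continuity of $x\mapsto 1/x$ on $(0,\infty)$.

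I do not anticipate a serious obstacle, since the essential combinatorial work is already carried out in Lemma~\ref{lem:concrete form}. The main care needed is notational: one must align the indexing convention there (namely $\mathcal{P}_{f}(\alpha_{1}^{(n)},\alpha_{0}^{(n)})=\alpha_{0}^{(n+1)}$) with what ``first weight'' means, and verify that the iteration genuinely preserves the weighted-shift structure so that $\alpha_{0}^{(n)}$ is well-defined at every step. Both are immediate from the bidiagonal pattern of a weighted shift and the fact that $\mathcal{P}_{f}, \mathcal{P}_{g}$ only couple adjacent weights.
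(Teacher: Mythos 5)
Your proposal is correct and follows essentially the same route as the paper: reduce to the explicit binomial-convolution formula for the first weight via Lemma~\ref{lem:concrete form} (after checking that the transformation preserves the weighted-shift structure), then kill the finitely many terms where $\alpha_j=a$ using the bound $\binom{n}{j}\lambda^{n-j}(1-\lambda)^{j}\le C_j\,n^{j}\lambda^{n}\to 0$, with the harmonic case handled by passing to reciprocals. Your rewriting of $\alpha_0^{(n)}-b$ via the normalization $\sum_j\binom{n}{j}\lambda^{n-j}(1-\lambda)^j=1$ is a minor streamlining of the same estimate the paper carries out.
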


\begin{proof}

By Lemmas \ref{lem:concrete form} and 
\ref{prop:converge0}, the first weights of 
$\Delta^{n}_{\frak{A}}(W_{\alpha})$ and 
$\Delta^{n}_{\frak{H}}(W_{\alpha})$ are
\begin{align*}
\alpha_{0}^{(n)}
& =
\sum_{j=0}^{n}\begin{pmatrix} n \\ j 
\end{pmatrix} \lambda^{n-j}(1-\lambda)^{j}\alpha_{j},\\
\beta_{0}^{(n)}
& =
\left[\sum_{j=0}^{n}\begin{pmatrix} n \\ j 
\end{pmatrix} \lambda^{n-j}
(1-\lambda)^{j}\alpha_{j}^{-1}\right]^{-1},
\end{align*}
respectively. Next, let $p$ be the largest number satisfying 
$\alpha_{p}=a$. Then for $n>p$, we have
\begin{align*}
\alpha_{0}^{(n)}
& =
\sum_{j=0}^{n}\begin{pmatrix} n \\ j 
\end{pmatrix} \lambda^{n-j}(1-\lambda)^{j}\alpha_{j}\\
& =
\sum_{j=0}^{p}\begin{pmatrix} n \\ j 
\end{pmatrix} \lambda^{n-j}(1-\lambda)^{j}a_{j}+
b\sum_{j=p+1}^{n}\begin{pmatrix} n \\ j 
\end{pmatrix} \lambda^{n-j}(1-\lambda)^{j}.
\end{align*}
Here we shall show 
$$\lim_{n\to\infty}
\sum_{j=0}^{p}\begin{pmatrix} n \\ j 
\end{pmatrix} \lambda^{n-j}(1-\lambda)^{j}a_{j}=0.$$
Let $m=\max\{a,b\}$ and $M=\max_{0\leq j\leq p}
\frac{1}{j!}(\frac{1-\lambda}{\lambda})^{j}$.
Then 
\begin{align*}
0 & \leq 
\sum_{j=0}^{p}\begin{pmatrix} n \\ j 
\end{pmatrix} \lambda^{n-j}(1-\lambda)^{j}a_{j}\\
& \leq 
m \sum_{j=0}^{p}\begin{pmatrix} n \\ j 
\end{pmatrix} \lambda^{n-j}(1-\lambda)^{j}\\
& \leq 
m \sum_{j=0}^{p}
n^{j}\lambda^{n}\frac{1}{j!}
\left(\frac{1-\lambda}{\lambda}\right)^{j}\\
& \leq
mM \sum_{j=0}^{p}
n^{p}\lambda^{n} 
 =
mM(p+1) n^{p}\lambda^{n}.
\end{align*}
Since $\lim_{n\to\infty} n^{p}\lambda^{n}=0$
(by l'Hospital's rule), we have
\begin{equation}
\lim_{n\to\infty}\sum_{j=0}^{p}
\begin{pmatrix} n \\ j 
\end{pmatrix} \lambda^{n-j}(1-\lambda)^{j}a_{j}=0. 
\label{eq: *}
\end{equation}
Hence we have 
\begin{align*}
\lim_{n\to \infty} \alpha_{0}^{(n)}
& =
\lim_{n\to \infty} 
\sum_{j=0}^{p}\begin{pmatrix} n \\ j 
\end{pmatrix} \lambda^{n-j}(1-\lambda)^{j}a_{j}\\
&\hspace*{1cm} +
\lim_{n\to \infty} 
b\sum_{j=p+1}^{n}\begin{pmatrix} n \\ j 
\end{pmatrix} \lambda^{n-j}(1-\lambda)^{j}=b,
\end{align*}
since $\sum_{j=0}^{n}\begin{pmatrix} n \\ j 
\end{pmatrix}
\lambda^{n-j}(1-\lambda)^{j}=1$ and 
\eqref{eq: *}.
$\lim_{n\to\infty}\beta_{0}^{(n)}=b$
can be proven by the 
same way, too.
\end{proof}

\begin{proposition}\label{prop:converge2}
Suppose $a$ and $b$ are any distinct positive real
numbers. 
Then there is a unilateral weighted shift
$W_{\alpha}$ with a weight sequence $\alpha$ 
such that both sequences of the first weights of
$\Delta_{\frak{A}}^{n}(W_{\alpha})$ and 
$\Delta_{\frak{H}}^{n}(W_{\alpha})$ have subsequences 
which are converging to $a$ and $b$.
\end{proposition}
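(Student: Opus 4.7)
The plan is to build $\alpha$ as an alternating concatenation of ``$a$-blocks'' and ``$b$-blocks'' whose lengths grow so fast that the coefficients $p_j^{(n)} := \binom{n}{j}\lambda^{n-j}(1-\lambda)^j$ appearing in Lemma \ref{lem:concrete form} concentrate inside whichever block I choose. Concretely, set $N_k := 4^k$ for $k\ge 0$ and define
$$ \alpha_j := \begin{cases} a & \text{if } N_{2k}\le j < N_{2k+1}\ \text{for some}\ k\ge 0,\\ b & \text{otherwise.}\end{cases} $$
Since $a,b>0$ are fixed, $\alpha\in\ell^{\infty}$ with $\|\alpha\|_{\infty}=\max(a,b)$, so $W_{\alpha}\in B(\ell^{2})$ and, as in the proof of Proposition \ref{prop:converge1}, each iterate $\Delta_{\frak{A}}^{n}(W_{\alpha})$ and $\Delta_{\frak{H}}^{n}(W_{\alpha})$ is again a weighted unilateral shift with an explicit weight sequence $\alpha^{(n)}$, respectively $\beta^{(n)}$.

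Next, I would read Lemma \ref{lem:concrete form} as the assertion
$$ \alpha_0^{(n)} = \sum_{j=0}^{n} p_j^{(n)}\, \alpha_j, \qquad \beta_0^{(n)} = \Bigl(\sum_{j=0}^{n} p_j^{(n)}\, \alpha_j^{-1}\Bigr)^{-1}, $$
so both first-weight sequences are weighted averages against the binomial weights $p_j^{(n)}$, which have mean $(1-\lambda)n$ and variance $\lambda(1-\lambda)n$. Chebyshev's inequality then gives
$$ \sum_{j\,:\,|j-(1-\lambda)n|\ge n^{3/4}} p_j^{(n)}\;\le\;\frac{\lambda(1-\lambda)}{\sqrt{n}}\;\longrightarrow\;0. $$

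For the $a$-subsequence I would choose $n_k$ so that $(1-\lambda)n_k$ sits at the midpoint of the $a$-block $[N_{2k},N_{2k+1})$; since that block has half-length $\tfrac{3}{2}\cdot 4^{2k}$ while $n_k^{3/4}=O(4^{3k/2})$, the Chebyshev interval lies strictly inside the block for all large $k$. Splitting $\alpha_0^{(n_k)}$ into the contribution from inside the block (where $\alpha_j=a$ and the coefficients sum to $1-O(n_k^{-1/2})$) and the tail (bounded by $\max(a,b)$), I obtain $\alpha_0^{(n_k)}\to a$; the same estimate applied to the bounded sequence $\alpha_j^{-1}\in\{a^{-1},b^{-1}\}$ gives $\sum_{j} p_j^{(n_k)}\alpha_j^{-1}\to a^{-1}$, hence $\beta_0^{(n_k)}\to a$. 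Choosing instead $n'_k$ centred at the midpoint of the adjacent $b$-block $[N_{2k+1},N_{2k+2})$ and repeating the argument yields $\alpha_0^{(n'_k)}, \beta_0^{(n'_k)}\to b$, so the same $W_{\alpha}$ serves for both means.

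The main obstacle is coupling the block-size growth to the binomial spread so that the Chebyshev bound uniformly kills the tail contribution as $k\to\infty$, simultaneously for the arithmetic and the harmonic computation. The geometric growth $N_k=4^k$ (indeed any growth faster than $\sqrt{n_k}$) achieves this; once concentration is in place, the harmonic case is essentially free because it reduces to the arithmetic averaging of the bounded sequence $\alpha_j^{-1}$.
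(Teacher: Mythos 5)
Your proof is correct, but it takes a genuinely different route from the paper. The paper builds the weight sequence \emph{adaptively}: starting from $\alpha=(a,b,b,\dots)$ it invokes Proposition \ref{prop:converge1} to find $n_{1}$ with $|\alpha_{0}^{(n_{1})}-b|<\tfrac12$ and $|\beta_{0}^{(n_{1})}-b|<\tfrac12$, then switches the tail to $a$'s, finds $n_{2}>n_{1}$ via the ($a\leftrightarrow b$)-symmetric version of that proposition, and so on; the construction works because, by Lemma \ref{lem:concrete form}, $\alpha_{0}^{(n)}$ and $\beta_{0}^{(n)}$ depend only on $\alpha_{0},\dots,\alpha_{n}$, so each later modification of the tail leaves the already-secured estimates intact. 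You instead fix the sequence in advance (blocks of lengths governed by $N_{k}=4^{k}$) and prove convergence along explicitly chosen $n_{k}$ by a quantitative concentration estimate: the weights $\binom{n}{j}\lambda^{n-j}(1-\lambda)^{j}$ are a binomial distribution in $j$ with mean $(1-\lambda)n$ and variance $\lambda(1-\lambda)n$, and Chebyshev confines all but $O(n^{-1/2})$ of the mass to a window of width $2n^{3/4}$, which your geometric block growth swallows. This bypasses Proposition \ref{prop:converge1} entirely, is fully constructive with an explicit rate, and makes transparent the point that Theorem \ref{cor:converge3} actually needs, namely that the \emph{same} subsequence $\{n_{k}\}$ works simultaneously for $\frak{A}$ and $\frak{H}$ (both first weights being averages against the same binomial weights, one of $\alpha_{j}$ and one of $\alpha_{j}^{-1}$); the paper's argument is softer and shorter once Proposition \ref{prop:converge1} is available, but leaves the sequence implicit. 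Your computations check out: the $a$-block $[4^{2k},4^{2k+1})$ has half-length $\tfrac32\cdot 4^{2k}$, while $n_{k}^{3/4}=O(4^{3k/2})$, so the Chebyshev window does lie inside the block for large $k$, and the harmonic case follows by applying the identical estimate to the bounded sequence $\alpha_{j}^{-1}\in\{a^{-1},b^{-1}\}$.
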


\begin{proof}
Suppose that $\alpha=(a,b,b,...)$. 
By Proposition \ref{prop:converge1}, 
there exists a natural number $n_{1}$ such that
$$ |\alpha_{0}^{(n_{1})}-b|<\frac{1}{2}\quad
\text{and}\quad
|\beta_{0}^{(n_{1})}-b|<\frac{1}{2}. $$
Suppose that 
$$\alpha=(
\overbrace{a,b,b,...,b}^{n_{1}},a,a,...).$$ 
By Proposition \ref{prop:converge1}, 
there exists a natural number $n_{2}$ such that $n_{1}<n_{2}$,
$$ |\alpha_{0}^{(n_{2})}-a|<\frac{1}{2^2}\quad
\text{and}\quad
|\beta_{0}^{(n_{2})}-a|<\frac{1}{2^2}. $$
Suppose that 
$$\alpha=(
\underbrace{
\overbrace{a,b,b,...,b}^{n_{1}},a,a,...,a}_{n_{2}},b,b,...).$$ 
By Proposition \ref{prop:converge1}, 
there exists a natural number $n_{3}$ such that  $n_{1}<n_{2}<n_{3}$,
$$ |\alpha_{0}^{(n_{3})}-b|<\frac{1}{2^3}\quad
\text{and}\quad
|\beta_{0}^{(n_{3})}-b|<\frac{1}{2^3}. $$

Repeating this process, for each
natural number $k$, there exist natural numbers $n_{k}$ 
and $n'_{k}$ such that
\begin{align*}
& 
|\alpha_{0}^{(n_{k})}-a|<\frac{1}{2^k},\quad
|\beta_{0}^{(n_{k})}-a|<\frac{1}{2^k}\\
& 
|\alpha_{0}^{(n'_{k})}-b|<\frac{1}{2^k},\quad
|\beta_{0}^{(n'_{k})}-b|<\frac{1}{2^k}.
\end{align*}
Hence there are subsequences of
the first weights of 
$\Delta_{\frak{A}}^{n}(W_{\alpha})$ and 
$\Delta_{\frak{H}}^{n}(W_{\alpha})$ which converge to $a$ and $b$.
\end{proof}

\begin{proof}[Proof of Theorem \ref{cor:converge3}]
Let $T:=W_{\alpha}$ be a weighted unilateral shift 
defined in Proposition \ref{prop:converge2}.
Define a sequence of the first weight of 
$\Delta_{\frak{M}_f}^{n}(T)$ by 
$\{\gamma_{0}^{(n)}\}$.
By $\lambda=f'(1)\in (0,1)$ and an inequality 
\eqref{eq:condition}, we have 
$$ \beta_{0}^{(n)}\leq \gamma_{0}^{(n)}\leq 
\alpha_{0}^{(n)} \quad\mbox{for $n=0,1,2,...$}$$
By Proposition \ref{prop:converge2},
there exist subsequences of the first weights of
$\Delta_{\frak{A}}^{n}(T)$ and 
$\Delta_{\frak{H}}^{n}(T)$
which converge to $a$ 
in the same choice.
Hence under the same choice, 
a subsequence of 
$\{\gamma_{0}^{(n)}\}$ converges to $a$.
On the other hand, we can choose a subsequence
of $\{\gamma_{0}^{(n)}\}$ which converges to $b$.
Hence $ \{\Delta_{\frak{M}}^{n}(T)\} $
does not converge in the week operator topology.
\end{proof}

\section{Numerical ranges of  
generalizations of the Aluthge transformation}

In this section, we shall show inclusion 
relations among numerical ranges of 
 generalizations of the Aluthge transformation.
For each operator $T\in B(\mathcal{H})$, 
the {\bf numerical range} $W(T)$ of $T$ is 
defined by
$$ W(T)=\{\langle Tx,x\rangle |\ 
x\in \mathcal{H} \mbox{ is a unit vector}\}.$$
It is well known that the numerical range 
of an operator is a convex subset in 
$\mathbb{C}$ \cite{H1919, T1918}.
Let $\frak{M}$ be the geometric mean
i.e., $\Delta_{\frak{M}}(T)$ is the Aluthge 
transformation.
Then the following assertions are known:
(i) $\overline{W(\Delta_{\frak{M}}(T))}\subseteq
\overline{W(T)}$, where
$\overline{X}$ is a closure of a set $X$ 
\cite{W2002, Y2002-1},
(ii) $co \sigma(T)=\cap 
W(\Delta_{\frak{M}}^{n}(T))$ for any matrix $T$,
where $co X$ is a convex hull of a set $X$ 
\cite{A2004}.
In this section, we shall give a 
relation among $\overline{W(\Delta_{\frak{M}}(T))}$ 
respect to some operator means.
Here we shall consider a relation ``$\preceq$''
between two operator means.
It is defined as follows.
Let $\frak{M}_{f}$ and $\frak{N}_{g}$ be 
operator means.
$\frak{M}_{f}\preceq \frak{N}_{g}$ if and only if  
for any natural number $m$ and $s_{i}>0$ $(i=1,2,...,m)$,
$\displaystyle  \left[\frac{\mathcal{P}_{f}(s_{i},s_{j})}
{\mathcal{P}_{g}(s_{i},s_{j})}\right]\in \mathcal{M}_{m} $
is a positive semi-definite.
Let $\frak{A}$, $\frak{L}$, $\frak{G}$ and 
$\frak{H}$ be non-weighted arithmetic, logarithmic,
geometric and harmonic means, respectively.
It is known that
\begin{equation}
\frak{H}\preceq \frak{G}\preceq
\frak{L}\preceq \frak{A} 
\label{eq: positive definiteness of functions}
\end{equation}
(cf. \cite{AN2017, HK2003, K2014}).

\begin{theorem}\label{thm:numerical range}
Let $T\in B(\mathcal{H})$, and let 
$\frak{M}_{f}, \frak{N}_{g}$ be operator means.
Then the following hold.
\begin{itemize}
\item[(1)] 
If $\frak{M}_{f}\preceq \frak{N}_{g}$, then 
 $\overline{W(\Delta_{\frak{M}_{f}}(T))}\subseteq
\overline{W(\Delta_{\frak{N}_{g}}(T))}$.
\item[(2)]
If $\frak{M}_{f}  \preceq \frak{A}$, then
$\overline{W(\Delta_{\frak{M}_{f}}(T))}\subseteq
\overline{W(T)}$,
where $\frak{A}$ is the arithmetic mean.
\end{itemize}
\end{theorem}

If $\ker (T)=\{0\}$, then $\Delta_{\frak{A}}(T)=\hat{T}$,
and by \eqref{eq: positive definiteness of functions},
Theorem \ref{thm:numerical range} is an extension of 
\cite[Theorem 3.1, Corollary 3.3]{arXivCCM2018}.

To prove Theorem \ref{thm:numerical range},
we shall use the following results.
A norm $|\!|\!| \cdot |\!|\!|$ is called 
a {\bf unitarily invariant norm} if 
$|\!|\!| UXV |\!|\!|=|\!|\!| X |\!|\!|$ holds for 
all unitary $U,V$ and $X\in B(\mathcal{H})$.

%Moreover, we shall prepare the following 
%result to prove Theorem \ref{thm:numerical range}.

\begin{thqt}[\cite{H1965, SW1968}]
\label{thmqt: characterization of numerical range}
Let $T\in B(\mathcal{H})$. Then 
$$ \overline{W(T)}=\bigcap_{\lambda\in \mathbb{C}}
\{\mu\in \mathbb{C}\  |\ 
|\mu-\lambda|\leq \|T-\lambda I\|\}.$$
\end{thqt}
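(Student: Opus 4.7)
My plan is to translate the inclusion of closed numerical ranges into a family of scalar norm inequalities using Theorem~\ref{thmqt: characterization of numerical range}, which realises $\overline{W(S)}$ as the intersection of the closed disks $\{\mu:|\mu-\alpha|\le\|S-\alpha I\|\}$ as $\alpha$ ranges over $\mathbb{C}$. Consequently, the desired inclusion $\overline{W(\Delta_{\frak{M}_{f}}(T))}\subseteq\overline{W(\Delta_{\frak{N}_{g}}(T))}$ will follow once I establish
$$\|\Delta_{\frak{M}_{f}}(T)-\alpha I\|\le\|\Delta_{\frak{N}_{g}}(T)-\alpha I\|\qquad\text{for every }\alpha\in\mathbb{C},$$
because then every disk on the $\frak{M}_{f}$-side is contained in the corresponding disk on the $\frak{N}_{g}$-side, and the intersections inherit the inclusion.

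To produce this norm comparison I first reduce to the case $|T|\in B(\mathcal{H})^{++}$ by the now-standard trick of replacing $|T|$ with $|T|+\varepsilon I$ and passing $\varepsilon\searrow 0$, using continuity of the double operator integrals in their spectral data. Once $|T|$ is invertible, Proposition~\ref{prop: norm}(3) rewrites both shifted transforms through a single underlying operator:
$$\Delta_{\frak{M}_{f}}(T)-\alpha I=\Phi_{|T|,|T|,\mathcal{P}_{f}}(U-\alpha|T|^{-1}),\quad \Delta_{\frak{N}_{g}}(T)-\alpha I=\Phi_{|T|,|T|,\mathcal{P}_{g}}(U-\alpha|T|^{-1}).$$
The pointwise factorisation $\mathcal{P}_{f}(s,t)=h(s,t)\,\mathcal{P}_{g}(s,t)$ with $h:=\mathcal{P}_{f}/\mathcal{P}_{g}$ translates, via the multiplicativity of Schur multipliers sharing the same spectral projections ($dE_{s}dE_{s'}=\delta_{s=s'}dE_{s}$), into the operator identity $\Phi_{|T|,|T|,\mathcal{P}_{f}}=\Phi_{|T|,|T|,h}\circ\Phi_{|T|,|T|,\mathcal{P}_{g}}$. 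It therefore suffices to prove that $\Phi_{|T|,|T|,h}$ is a contraction on $B(\mathcal{H})$.

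The heart of the argument is that contraction property, and it is where the hypothesis $\frak{M}_{f}\preceq\frak{N}_{g}$ enters. By definition, the hypothesis says exactly that $h(s,t)$ is a positive semi-definite kernel on every finite subset of $(0,\infty)$. Moreover, since $f(1)=g(1)=1$ yields $\mathcal{P}_{f}(s,s)=\mathcal{P}_{g}(s,s)=s$, one has $h(s,s)\equiv 1$ on the diagonal. The plan is to invoke a Kolmogorov / GNS-type factorisation of positive definite kernels to represent $h(s,t)=\int_{\Omega}\alpha(s,x)\overline{\alpha(t,x)}\,d\sigma'(x)$ with $\int_{\Omega}|\alpha(s,x)|^{2}d\sigma'(x)=h(s,s)=1$. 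This places $h$ precisely in the framework of Theorem~\ref{thqt:equivalence Schur}(iii), whereupon condition~(iv) of that theorem delivers $\|\Phi_{|T|,|T|,h}\|\le 1$.

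The main obstacle is this last step: producing a genuine factorisation of $h$ satisfying the integrability requirement of Theorem~\ref{thqt:equivalence Schur}(iv) starting from the purely finite-rank positivity hypothesis, and securing measurability of the factor $\alpha(s,x)$ so that the associated Schur multiplier is indeed bounded on the relevant trace ideal. Once that is settled, chaining the three reductions gives $\|\Delta_{\frak{M}_{f}}(T)-\alpha I\|=\|\Phi_{|T|,|T|,h}(\Delta_{\frak{N}_{g}}(T)-\alpha I)\|\le\|\Delta_{\frak{N}_{g}}(T)-\alpha I\|$ uniformly in $\alpha$, and the numerical range inclusion follows immediately from the opening characterisation.
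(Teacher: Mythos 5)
Your proposal does not prove the statement at issue; it assumes it. The statement here is the Hildebrandt/Stampfli--Williams characterization
$\overline{W(T)}=\bigcap_{\lambda\in\mathbb{C}}\{\mu\ |\ |\mu-\lambda|\leq\|T-\lambda I\|\}$,
which the paper itself quotes as a known theorem (Theorem~\ref{thmqt: characterization of numerical range}) and never proves, precisely so it can be used as a black box. Your very first sentence invokes this characterization, and everything that follows is a proof of a \emph{different} result, namely the paper's Theorem~\ref{thm:numerical range} (the inclusion $\overline{W(\Delta_{\frak{M}_{f}}(T))}\subseteq\overline{W(\Delta_{\frak{N}_{g}}(T))}$ under $\frak{M}_{f}\preceq\frak{N}_{g}$). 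Incidentally, that part of your write-up does track the paper's actual proof of Theorem~\ref{thm:numerical range} --- reduction to invertible $|T|$, Proposition~\ref{prop: norm}(3), then a norm comparison --- except that the paper simply cites the Hiai--Kosaki inequality (Theorem~\ref{thmqt: norm inequality}) where you attempt to re-derive it via a Kolmogorov factorisation, a step you yourself flag as unresolved. But none of this addresses the assigned statement: as a proof of Theorem~\ref{thmqt: characterization of numerical range} the argument is circular, since the claimed conclusion is its first input.

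A genuine proof must establish two inclusions. The inclusion $\subseteq$ is elementary: for a unit vector $x$ and any $\lambda\in\mathbb{C}$, Cauchy--Schwarz gives $|\langle Tx,x\rangle-\lambda|=|\langle (T-\lambda I)x,x\rangle|\leq\|T-\lambda I\|$, so $W(T)$ lies in every disk $\{\mu\ |\ |\mu-\lambda|\leq\|T-\lambda I\|\}$, and these disks are closed, so $\overline{W(T)}$ does too. The reverse inclusion is the substantive part and requires the Toeplitz--Hausdorff theorem (convexity of $W(T)$) together with a separation argument: if $\mu\notin\overline{W(T)}$, then after replacing $T$ by $e^{i\theta}(T-cI)$ and $\mu$ by $e^{i\theta}(\mu-c)$ for suitable $\theta, c$ (both $W(\cdot)$ and the disks transform consistently under this map), one may assume $\overline{W(T)}\subseteq\{z\ |\ \mathrm{Re}\,z\leq 0\}$ while $\mathrm{Re}\,\mu=\delta>0$. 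Then for real $r>0$,
\begin{equation*}
\|T+rI\|^{2}=\sup_{\|x\|=1}\bigl(\|Tx\|^{2}+2r\,\mathrm{Re}\langle Tx,x\rangle+r^{2}\bigr)\leq\|T\|^{2}+r^{2},
\end{equation*}
since $\mathrm{Re}\langle Tx,x\rangle\leq 0$ for all unit $x$, whereas $|\mu+r|^{2}\geq(r+\delta)^{2}=r^{2}+2r\delta+\delta^{2}>\|T\|^{2}+r^{2}$ once $r>\|T\|^{2}/(2\delta)$. Thus $\mu$ violates the disk condition at $\lambda=-r$, so $\mu$ is not in the intersection. This separation computation is the missing core of the theorem, and nothing in your proposal supplies it.
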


%\begin{thqt}[{\cite[Corollary IX.5.3]{B1997}},
%\cite{BD1995}]\label{thmqt:Heinz inequality}
%Let $A,B\in B(\mathcal{H})^{+}$.
%Then for each $X\in B(\mathcal{H})$,
%%
%$$ |\!|\!|A^{\lambda}XB^{1-\lambda}|\!|\!|
%\leq |\!|\!|AX|\!|\!|^{\lambda}
%|\!|\!|XB|\!|\!|^{1-\lambda}$$
%%
%holds for all unitarily invariant norm.
%\end{thqt}

\begin{thqt}[\cite{HK1999, HK2003}]
\label{thmqt: norm inequality}
Let $\frak{M}_{f}, \frak{N}_{g}$ be operator means,
%such that $\frak{M}_{f}\preceq \frak{M}_{g}$, 
and let $A,B\in B(\mathcal{H})^{+}$.
Then $\frak{M}_{f}\preceq \frak{N}_{g}$ holds
if and only if 
$$ |\!|\!| \Phi_{A,B,\mathcal{P}_{f}}(X)|\!|\!|\leq 
|\!|\!| \Phi_{A,B,\mathcal{P}_{g}}(X) |\!|\!|$$
holds for all $X\in B(\mathcal{H})$ 
and any unitarily invariant norm $|\!|\!| \cdot |\!|\!|$.
\end{thqt}

Using the above results, we shall show 
Theorem \ref{thm:numerical range}.

\begin{proof}[Proof of Theorem 
\ref{thm:numerical range}]
(1) For $\varepsilon >0$, $|T|_{\varepsilon}:=
|T|+\varepsilon I$ is positive invertible, and 
$|T|_{\varepsilon}\searrow |T|$ as $\varepsilon 
\searrow 0$. By considering this fact,
we may assume that $|T|$ is 
invertible.

By Proposition \ref{prop: norm} (3), 
for $\lambda\in \mathbb{C}$, 
$$\Phi_{|T|,|T|,\mathcal{P}_{f}}(U-\lambda |T|^{-1})=
\Delta_{\frak{M}_{f}}(T)-\lambda I.
$$
Then by Theorem \ref{thmqt: norm inequality},
we have
\begin{align*}
\| \Delta_{\frak{M}_{f}}(T)-\lambda I\|& =
\| \Phi_{|T|,|T|,\mathcal{P}_{f}}(U-\lambda |T|^{-1})\|\\
& \leq 
\| \Phi_{|T|,|T|,\mathcal{P}_{g}}(U-\lambda |T|^{-1})\|=
\| \Delta_{\frak{N}_{g}}(T)-\lambda I\|
\end{align*}
for all $\lambda\in \mathbb{C}$.
Hence by Theorem 
\ref{thmqt: characterization of numerical range}, 
we have $\overline{W(\Delta_{\frak{M}_{f}}(T))}\subseteq
\overline{W(\Delta_{\frak{N}_{g}}(T))}$.

(2) Let $h(x)=1-\lambda+\lambda x$ be 
a representing function of $\frak{A}$.
By (1), we have
$\overline{W(\Delta_{\frak{M}_{f}}(T))}\subseteq
\overline{W(\Delta_{\frak{A}}(T))}$.
Hence we only prove 
$\overline{W(\Delta_{\frak{A}}(T))}\subseteq
\overline{W(T)}$.
Let $x\in \mathcal{H}$ be a unit vector.
Since
$$ \langle |T|Ux,x\rangle=
\langle TUx,Ux\rangle =
\| Ux\|^{2}\langle T \frac{Ux}{\|Ux\|}, 
\frac{Ux}{\|Ux\|}\rangle,$$
$$\langle |T|Ux,x\rangle\in co \{W(T)\cup 
\{0\}\}. $$
If $\{0\}\subset \ker(U)=\ker(T)$, 
then $0\in W(T)$, and
$\langle |T|Ux,x\rangle\in W(T)$.
If $\ker(U)= \{0\}$, then $U$ is isometry, and
$\langle |T|Ux,x\rangle\in W(T)$.
By the similar discussion, 
we have $\langle U^*UU|T|x,x\rangle\in W(T)$.
Hence we have
\begin{align*}
\overline{W(\Delta_{\frak{A}}(T))} & =
\overline{W((1-\lambda) |T|U+\lambda U^*U U|T|)}\\
& \subseteq 
(1-\lambda)\overline{W(|T|U)}+
\lambda \overline{W(U^*U U|T|)}
 \subseteq \overline{W(T)}.
\end{align*}
\end{proof}


\begin{thebibliography}{99}
\bibitem{AN2017} 
I.N. Albania and M. Nagisa, 
{\it Some families of operator norm inequalities}, 
Linear Algebra Appl., {\bf 534} (2017), 102--121.


\bibitem{A1990}
A. Aluthge, 
{\it On $p$-hyponormal operators for $0<p<1$}, 
Integral Equations Operator Theory, {\bf 13} (1990), 
307--315.

\bibitem{A2004}
T. Ando, 
{\it Aluthge transforms and the convex hull of 
the eigenvalues of a matrix}, 
Linear Multilinear Algebra, {\bf 52} 
(2004), 281--292.

\bibitem{AY2003}
T. Ando and T. Yamazaki, 
{\it The iterated Aluthge transforms of a 
$2$--by--$2$ matrix converge}, 
Linear Algebra Appl., {\bf 375} (2003), 299--309.

\bibitem{APS2011}
J. Antezana, E.R. Pujals, D. Stojanoff, 
{\it The iterated Aluthge transforms of a 
matrix converge}, 
Adv. Math., {\bf 226} (2011), 1591--1620.

%\bibitem{BS1967}
%C.S. Berger and J.G. Stampfli,
%{\it Mapping theorems for the numerical range},
%Amer. J. Math., {\bf 89} (1967), 1047--1055.

\bibitem{B1997}
R. Bhatia,
{\it Matrix analysis},  
Graduate Texts in Mathematics, 169. 
Springer-Verlag, New York, 1997. xii+347 pp.

%\bibitem{BD1995}
%R. Bhatia, and C. Davis
%{\it A Cauchy-Schwarz inequality for operators with
%applications},  
%Linear Algebra Appl., {\bf 223} (1995) 119--129.

\bibitem{BS1965}
M.S. Birman and M. Solomjak, 
{\it On double Stieltjes operator integrals}, 
Dokl. Akad. Nauk SSSR, {\bf 165} (1965) 1223--1226.

\bibitem{BS2003}
M.S. Birman and M. Solomyak, 
{\it Double operator integrals in a Hilbert space},
Integral Equations Operator Theory, {\bf 47} (2003), 
131--168.


\bibitem{arXivCCM2018}
F. Chabbabi, R.E. Curto and M. Mbekhta,
{\it The mean transform and the mean limit 
of an operator},
Proc. Amer. Math. Soc., {\bf 147}
(2019), 1119--1133.

\bibitem{CJL2005}
M. Ch\={o}, I.B. Jung and W.Y. Lee,
{\it On Aluthge transforms of $p$-hyponormal 
operators}, Integral Equations Operator Theory,
{\bf 53} (2005), 321--329.

\bibitem{DK1956}
Yu.L. Daletskii and S.G. Krein,
{\it Integration and differentiation of functions of 
Hermitian operators and applications to the
theory of perturbations,}
(Russian) Voronezh. Gos. Univ.
Trudy Sem. Funkcional. Anal., 
{\bf 1} (1956), 81--105.

\bibitem{FJKP2003}
C. Foia\c{s}, I.B. Jung, E. Ko and C. Pearcy,
{\it Complete contractivity of maps associated with
the Aluthge and Duggal transforms},
Pacific J. Math., {\bf 209} (2003), 249--259.

%\bibitem{EH2014}
%E. Effros and F. Hansen,
%{\it Non-commutative perspectives}, 
%Ann. Funct. Anal., {\bf 5} (2014), 74--79.
%
%\bibitem{FFS1990}
%J. I. Fujii, M. Fujii and Y. Seo,
%{\it An extension of the Kubo-Ando theory: solidarities}, 
%Math. Japon., {\bf 35} (1990), 387--396. 


%\bibitem{F1973}
%T. Furuta,
%{\it Some characterization of convexoid operators},
%Rev. Roumaine Math. Pures Appl.,
%{\bf 18} (1973) 893--900.

\bibitem{H2013}
F. Hansen, {\it The fast track to Loewner's theorem},
Linear Algebra Appl., 438 (2013), 4557--4571.

\bibitem{H1919}
F. Hausdorff,
{\it Der wertvorrat einer bilinearform},
Math. Z., {\bf 3} (1919), 314--316.

\bibitem{H1951}
E. Heinz,
{\it Beitr\"{a}ge zur St\"{o}rungstheorie der Spektralzerlegung,}
Math. Ann., {\bf 123}, (1951). 415--438.

\bibitem{HK1999}
F. Hiai and H. Kosaki,
{\it Means for matrices and comparison 
of their norms}, 
Indiana Univ. Math. J., {\bf 48} (1999), 899--936.

\bibitem{HK2003}
F. Hiai and H. Kosaki,
{\it Means of Hilbert space operators}, 
Lecture Notes in Mathematics, 1820. 
Springer-Verlag, Berlin, 2003. vii+148 pp.

\bibitem{H1965}
S. Hildebrandt,
{\it Numerischer Wertebereich und normale Dilatationen}, 
Acta Sci. Math. (Szeged), {\bf 26} (1965) 187--190.

\bibitem{H1997}
T. Huruya, 
{\it A note on p-hyponormal operators},
Proc. Amer. Math. Soc., {\bf 125} (1997), 3617--3624.

\bibitem{JKP2000}
I.B. Jung, E. Ko and C. Pearcy,
{\it Aluthge transforms of operators}, 
Integral Equations Operator Theory, {\bf 37} (2000), 
437--448.

\bibitem{K2014}
H. Kosaki,
{\it Strong monotonicity for various means}, 
J. Funct. Anal.,  {\bf 267} (2014), 
1917--1958.


\bibitem{KA1980}
F. Kubo and T. Ando, 
{\it Means of positive linear operators}, 
Math. Ann., {\bf 246} (1979/80), 205--224. 

\bibitem{LLY2014}
S.H. Lee, W.Y. Lee and J. Yoon, 
{\it The mean transform of bounded linear operators},
J. Math. Anal. Appl., 410 (2014), 70--81.

\bibitem{L2016}
S.H. Lee, {\it The transforms of Pythaforean and 
quadratic means of weighted shifts},
J. Chung Cheong Math. Soc., 29 (2016) 123--135.

\bibitem{P2016}
M. P\'{a}lfia, Operator means of probably measures
and generalized Karcher equations,
Adv. Math., {\bf 289} (2016) 951--1007.

\bibitem{P1984}
V.V. Peller, 
{\it Hankel operators and differentiability 
properties of functions of self-adjoint (unitary)
operators}, LOMI Preprints 
E-1-84, USSR Academy of Sciences 
Steklov Mathematical Institute 
Leningrad Department, 1984.

\bibitem{P1985}
V.V. Peller, 
{\it Hankel operators in the perturbation theory of 
unitary and self-adjoint operators},
Funct. Anal. Appl., {\bf 19} (1985), 111-123.

\bibitem{SW1968}
J.G. Stampfli and J.P. Williams,
{\it Growth conditions and the numerical range 
in a Banach algebra}, 
T$\hat{\rm o}$hoku Math. J., {\bf 20} (1968) 
417--424. 

\bibitem{T1918}
O. Toeplitz, 
{\it Das algbraische analogon zu einem satz von
Fej\"{e}r}, Math. Z., {\bf 2} (1918), 187--197.

\bibitem{W2002}
P.Y. Wu, 
{\it Numerical range of Aluthge transform of operator}, Linear Algebra Appl., {\bf 357} (2002), 295--298.

\bibitem{Y2002}
T. Yamazaki, 
{\it An expression of spectral radius via Aluthge
 transformation}, 
Proc. Amer. Math. Soc., {\bf 130} (2002), 
1131--1137.

\bibitem{Y2002-1}
T. Yamazaki, 
{\it On numerical range of the Aluthge transformation}, Linear Algebra Appl., {\bf 341} (2002), 111--117.
\end{thebibliography}
\end{document}